\newtheorem{thm}{Theorem}
\newtheorem{lem}[thm]{Lemma}
\newtheorem{cor}[thm]{Corollary}
\newtheorem{claim}[thm]{Claim}
\newtheorem{step}{Step}
\newtheorem*{ansatz}{Ansatz}
\newcommand{\Z}{\mathbb Z}
\newcommand{\N}{\mathbb N}
\newcommand{\R}{\mathbb R}
\newcommand{\bbp}{\mathbb{P}}
\DeclareMathOperator{\cov}{Cov}
\newcommand{\tld}[1]{\tilde{#1}}
\newcommand{\ep}{\epsilon}
\newcommand{\del}{\delta}
\newcommand{\om}{\omega}
\newcommand{\lam}{\lambda}
\newcommand{\sig}{\sigma}
\newcommand{\oom}{\bar{\omega}}
\newcommand{\ssig}{\bar{\sigma}}
\newcommand{\C}{\mathbb{C}}
\newcommand{\PP}{\mathbb{P}}
\newcommand{\cocycle}[2]{A^{(#1)}_{#2}}
\newcommand{\cocyclestar}[2]{A^{(#1)\,*}_{#2}}
\newcommand{\cocycleps}[2]{{\pertA}^{(#1)}_{#2}}
\newcommand{\pertA}{A^\epsilon}
\newcommand{\topspacemat}[2]{F_{#1}(#2)}
\newcommand{\topspaceDSunpert}[2]{F_{#1}(#2)}
\newcommand{\topspaceDSpert}[2]{\tilde F_{#1}(#2)}
\newcommand{\topspacestarDSunpert}[2]{F^*_{#1}(#2)}
\newcommand{\bottomspacemat}[2]{E_{#1}(#2)}
\newcommand{\bottomspaceDSunpert}[2]{E_{#1}(#2)}
\newcommand{\bottomspaceDSpert}[2]{\tilde E_{#1}(#2)}
\newcommand{\bottomspacestarDSunpert}[2]{E^*_{#1}(#2)}
\begin{document}

\title[Stochastic stability of Oseledets decompositions]{Stochastic stability of Lyapunov exponents and Oseledets splittings for semi-invertible matrix cocycles} \author{Gary
  Froyland, Cecilia Gonz\'alez-Tokman and Anthony Quas}
\address[Froyland and Gonz\'alez-Tokman]{School of Mathematics and Statistics, University of New South Wales,
  Sydney, NSW, 2052, Australia} \address[Quas]{Department of Mathematics and
  Statistics, University of Victoria, Victoria, BC, CANADA, V8W 3R4}

\begin{abstract}
 We establish (i) stability of Lyapunov exponents and (ii) convergence in probability of Oseledets spaces for semi-invertible matrix cocycles, subjected to small random perturbations. 
The first part extends results of Ledrappier and Young \cite{LedrappierYoung} to the semi-invertible setting.
The second part relies on the study of evolution of subspaces 
 in the Grassmannian, where the analysis developed is likely to be of wider interest.
\end{abstract}

\maketitle
\section{Introduction}

The landmark Oseledets Multiplicative Ergodic Theorem (MET) plays a
central role in modern dynamical systems, providing a basis for the
study of non-uniformly hyperbolic dynamical systems. Oseledets'
theorem has been extended in many ways beyond the original context of
products of finite-dimensional matrices, for instance to certain
classes of operators on Banach spaces and more abstractly to
non-expanding maps of non-positively curved spaces.

The original Oseledets theorem \cite{Oseledets} was formulated in both an invertible
version (both the base dynamics and the matrices are assumed to be
invertible) and a non-invertible version (neither the base dynamics
nor the matrices are assumed to be invertible). The conclusion in the
non-invertible case is much weaker than in the invertible case: in the
invertible version, the theorem gives a \emph{splitting} (that is, a
direct sum decomposition) of $\R^d$ into equivariant subspaces, each
with a characteristic exponent that is used to order the splitting components from largest to smallest expansion rate; whereas in the non-invertible version,
the theorem gives an equivariant \emph{filtration} (that is, a
decreasing nested sequence of subspaces) of $\R^d$.

In various combinations, the current authors and collaborators have
been working on extensions of the MET to what we have called the
\emph{semi-invertible} setting \cite{FLQ1,FLQ2,GTQuas}. This refers to the assumption that one
has an invertible underlying base dynamical system (also known as driving or forcing), but that the
matrices or operators that are composed may fail to be
invertible. In this setting, our theorems yield an equivariant
splitting as in the invertible case of the MET, rather than the
equivariant filtration that the previous theorems would have given.

We are interested in applications where the operators are
Perron-Frobenius operators of dynamical systems acting on suitable
Banach spaces. Here, the `suitable' Banach spaces are spaces that are
mapped into themselves by the Perron-Frobenius operator, and on which
the Perron-Frobenius operator is quasi-compact.  These Banach spaces
have been widely studied in the case of a single dynamical system.

An Ansatz that first appeared in a paper of Dellnitz, Froyland and
Sertl \cite{DellnitzFroylandSertl} in the context of Perron-Frobenius operators of a single
dynamical system is the following:

\begin{ansatz}The peripheral spectrum (that is, spectrum of the
Perron-Frobenius operator outside the essential spectral radius)
corresponds to global features of the system (such as bottlenecks or
almost-invariant regions) whereas the essential spectrum corresponds
to local features of the system, such as rates of expansion.
\end{ansatz}

In a series of papers, they take this idea further by showing that
level sets of eigenfunctions with eigenvalues peripheral to the
essential spectral radius can be used to locate almost-invariant sets in the dynamical system
\cite{DellnitzJunge,FroylandDellnitz,Froyland2005,GTHuntWright}.
Figure~\ref{fig1} gives a schematic illustration of such a system:
The left and right halves are almost-invariant under the dynamics, but the bottleneck joining them allows small but non-negligible interaction between them.

\begin{figure}[hbt]
\begin{center}
 \begin{tabular}{cc}
       \includegraphics[height=3.25cm]{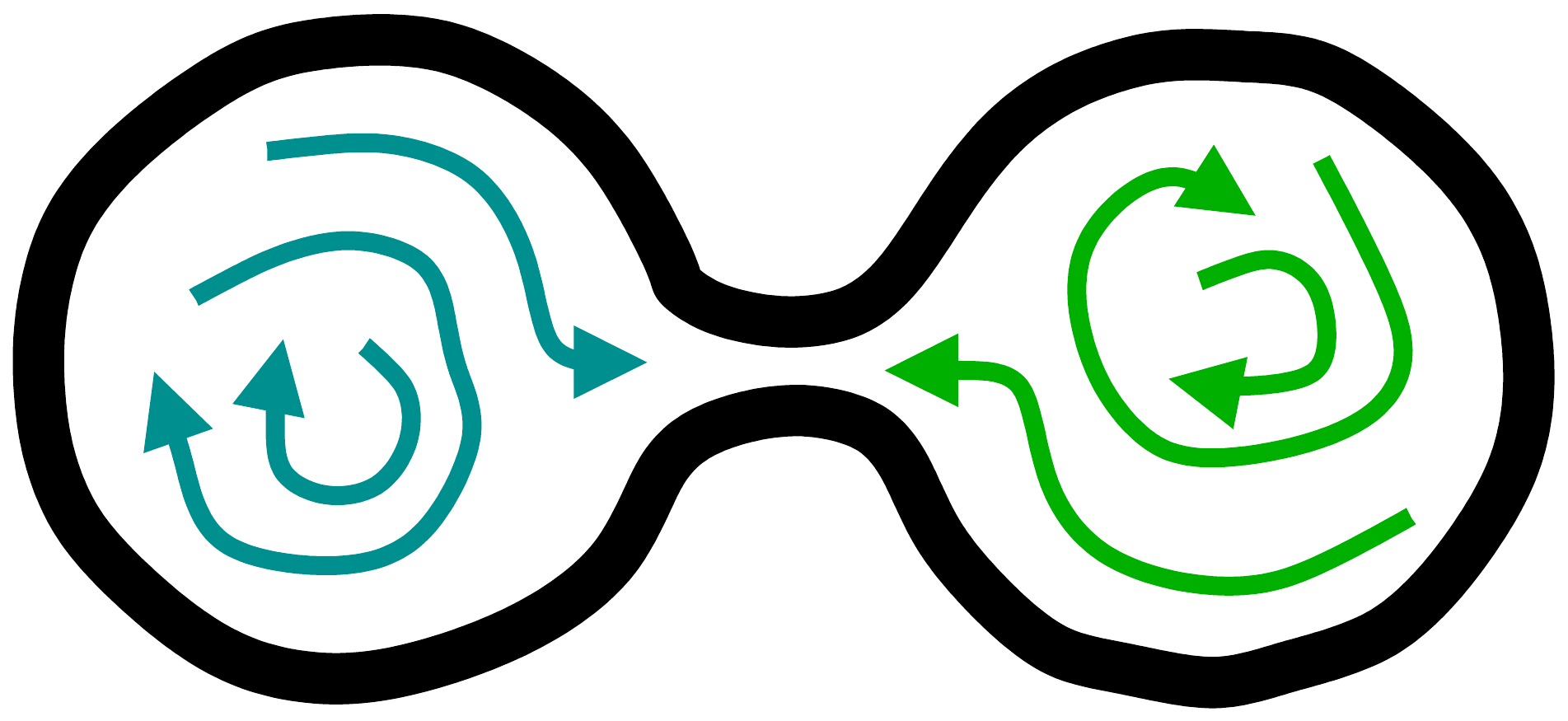}
 & \includegraphics[height=3.25cm]{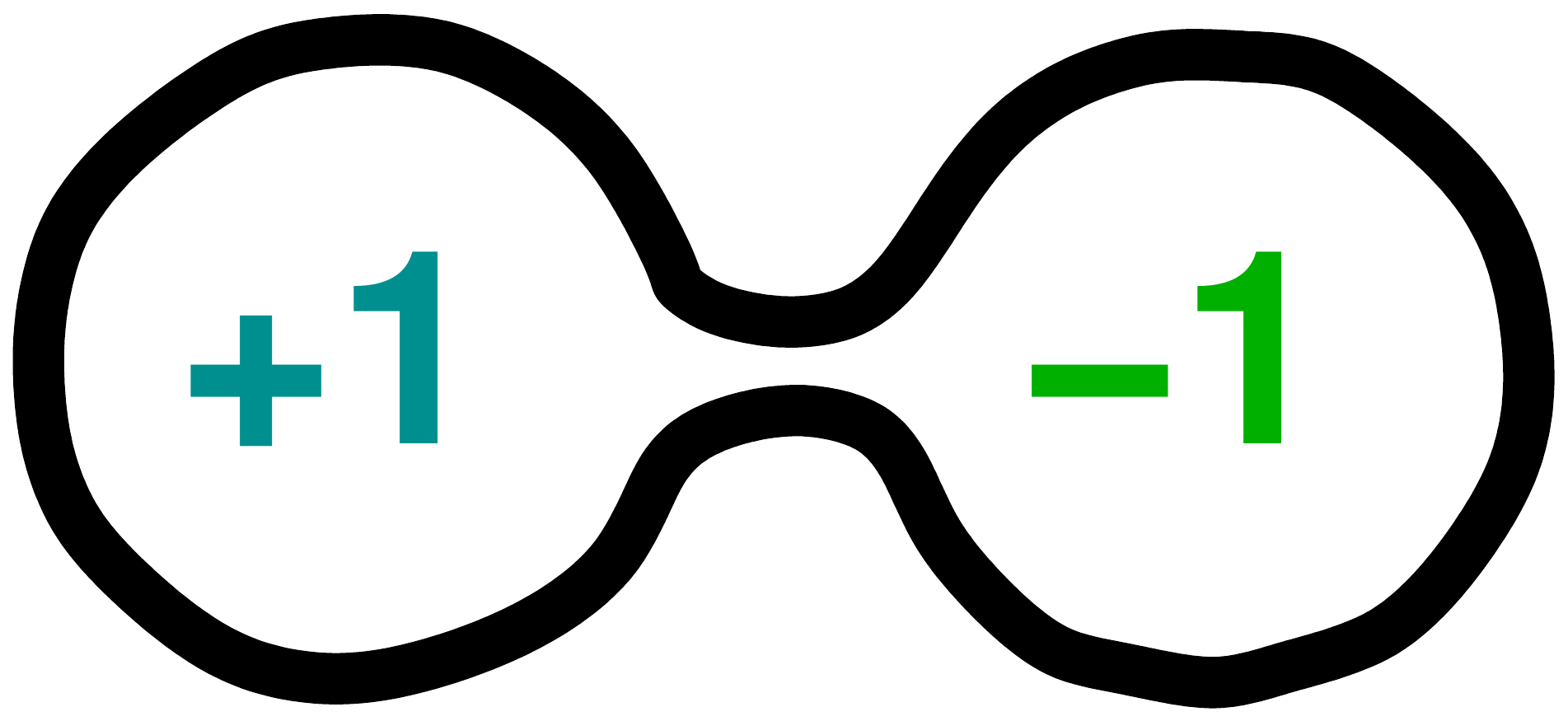} 
 \\
     {\footnotesize (a) } 
& {\footnotesize (b)} 
  \end{tabular}
  \end{center}
  \caption{(a) Schematic representation of a dynamical system with almost-invariant regions. 
  (b) Approximate values of eigenfunction corresponding to the bottleneck.}\label{fig1}
\end{figure}

While this Ansatz was initially made in the context of a single
dynamical system, it seems to apply equally in the case of random
dynamical systems \cite{FroylandLloydSantitissadeekorn,FroylandSantiMonahan,Froyland-AnalyticFramework}, 
and this is the central motivation for our research
in this area. It is well known that Perron-Frobenius operators of non-invertible maps are
essentially never invertible, but it is often reasonable to assume
that the base dynamics are invertible. Indeed, even if the driving system  
is non-invertible,
one can make use of canonical mathematical techniques to extend it to an invertible one.
  Hence, we naturally find
ourselves in the semi-invertible category.  The principal object that
we are interested in understanding is the second Oseledets subspace
(or more generally the first few Oseledets subspaces).

The significance of our extensions to the MET is that the second
subspace that we obtain is low-dimensional (typically one-dimensional)
instead of $(d-1)$-dimensional, which is what would come from the
standard non-invertible MET. In numerical applications, where $d$ may
be $10^5$ or greater, one cannot expect to say anything reasonable
about level sets of functions belonging to a high-dimensional
subspace, whereas using the semi-invertible version of the theorem, we
are once again in a position to make sense of the level sets.

In practice, of course, one cannot numerically study the action of
Perron-Frobenius operators on infinite-dimensional Banach spaces. Nor
can one find a finite-dimensional subspace preserved by the
operators. A remarkably fruitful approach is the so-called \emph{Ulam
  method}. Here, the state space is cut into small reasonably regular
pieces and a single dynamical system is treated as a Markov chain, by
applying the dynamical system and then randomizing over the cell in
which the point lands. This also makes sense for random dynamical
systems. 

In \cite{FGTQ}, we showed that applying the Ulam method to certain random
dynamical systems, the top Oseledets space of the truncated system
converges in probability to the true top Oseledets space of the random
dynamical system as the size of the partition is shrunk to 0. The top
Oseledets space is known to correspond to the random absolutely
continuous invariant measure of the system. It is natural to ask
whether the subsequent Oseledets spaces for the truncated systems
converge to the corresponding Oseledets spaces for the full system.
We are not yet able to answer this, although the current paper
represents a substantial step in this direction.

In \cite{FGTQ}, we viewed the Ulam projections of the Perron-Frobenius
operator as perturbations of the original operator, and showed that
the top Oseledets space was robust to the kind of projections that
were being considered. In this paper, we prove convergence the
of subsequent Oseledets spaces under certain perturbations, but do this
in the context of matrices instead of infinite-dimensional operators. 

In general, Lyapunov exponents and Oseledets subspaces are known to be
highly sensitive to perturbations. A mechanism responsible for this
is attributed to Ma\~n\'e; see also \cite{ArnoldCong-simple}. 
Ledrappier and Young considered the case of perturbations of
random products of uniformly invertible matrices \cite{LedrappierYoung}. 
This followed related work of Young in the two-dimensional setting \cite{Young86}.
In view of the
sensitivity results, it was necessary to restrict the class of
perturbations that they considered, and they dealt with the situation
where the distribution of the perturbation of the matrix at time 0 was
absolutely continuous (with control on the density) conditioned on all
previous perturbations. The simplest instance of this is the case where
the matrices to be multiplied are subjected to additive
i.i.d. absolutely continuous noise. In this situation, they showed
that the perturbed exponents converge almost surely to the true
exponents as the noise is shrunk to 0. While they did not directly
address the Oseledets subspaces, work of Ochs shows that convergence
of the Lyapunov exponents implies convergence in probability of the
Oseledets subspaces \cite{Ochs}. 

In this paper, we deal with the case of uniform i.i.d. additive noise
in the matrices, but make no assumption on invertibility. 
The conclusions that we obtain are the same as may be
obtained in the invertible case. Our argument first demonstrates
stability of the Lyapunov exponents, and then shows stability of the
Oseledets subspaces. The first part is closely based
on Ledrappier and Young's approach, although we need to do some
non-trivial extra work to deal with the lack of uniform invertibility
(in Ledrappier-Young's argument, in one step, there is an upper bound to the amount
of damage that can be done to the exponents, whereas in the
non-invertible case there is no such bound). The second part of the
argument is completely new. The methods of Ochs cannot be made to work
here because they are based on finding the space with the smallest
exponent and then using tensor products to move up the ladder. In the
case where the smallest exponent is $-\infty$, when one takes tensor
products, all products with this subspace have exponent $-\infty$ so
there is no distinguished second-smallest subspace. To get around this
problem, we use the Grassmannian in place of the exterior algebra. We
study evolution of subspaces in the Grassmannian, and show that this
is controlled by fractional linear transformations. An important role
is played by a higher-dimensional analogue of the cross ratio.

We are hopeful that the techniques that we introduce to control
evolution of these Oseledets subspaces under the matrices may be
applied much more widely.

\subsection{Statements of the main results}

If $\sigma\colon (\Omega,\PP)\to (\Omega,\PP)$ is a
measure-preserving transformation of a probability space and $A\colon
\Omega\to M_{d\times d}(\R)$ is a measurable matrix-valued function, 
we let $\cocycle n\omega$ denote the product
$A(\sigma^{n-1}\omega)A(\sigma^{n-2}\omega)\cdots A(\omega)$.
We call the tuple $(\Omega,\PP,\sigma,A)$ a \emph{matrix cocycle}.

Let $U=\{A\in M_{d\times d}(\R)\colon \|A\|\le 1\}$ (here and throughout the paper,
the norm of a matrix is its operator norm). Let $\bar\Omega=\Omega\times U^\Z$.
We write $\bar\omega=(\omega,\Delta)$ for a typical element of $\bar\Omega$.
If $\Omega$ is equipped with the measure $\PP$, we equip 
$\bar\Omega$ with the measure
$\bar\PP=\PP\times\lambda^\Z$, where $\lambda$ is the uniform measure
on $U$ and $\lambda^\Z$ is the product measure.
We fix an $\epsilon>0$. Then for an element $\bar\omega\in \bar\Omega$, 
the corresponding sequence of matrices
is $(\pertA_n(\bar\omega))_{n\in\Z}=(A(\sigma^n\omega)+\epsilon\Delta_n)_{n\in\Z}$.
This paper is concerned with a comparison of the properties of the matrix cocycle
$(\Omega,\PP,\sigma,A)$ with those of the matrix cocycle 
$(\bar\Omega,\bar \PP,\bar\sigma,\pertA)$ as $\epsilon\to 0$.

The main result of this paper is the following.

\begin{thm}\label{thm:main}
Let $\sigma$ be an ergodic measure-preserving transformation
of $(\Omega,\PP)$ and let $A\colon\Omega\to M_{d\times d}(\R)$ be
a measurable map such that $\int\log^+\|A(\omega)\|\,d\PP(\omega)<\infty$.

Let the Lyapunov exponents of the matrix cocycle be 
$\lambda_1>\ldots>\lambda_p\ge-\infty$ with multiplicities $d_1,d_2,\ldots,d_p$
and let the corresponding Oseledets decomposition be
$\R^d=Y_1(\omega)\oplus\ldots\oplus Y_p(\omega)$.

Let $D_0=0$, $D_i=d_1+\ldots+d_i$ and let the Lyapunov exponents (with multiplicity)
be $\infty>\mu_1\ge \mu_2\ge\ldots\ge \mu_d\ge-\infty$, so that $\mu_i=\lambda_k$
if $D_{k-1}<i\le D_k$.

\begin{enumerate}[(I)]
\item (Convergence of Lyapunov exponents)\label{part:expts}
Let the Lyapunov exponents of the perturbed matrix cocycle $(\bar\Omega,\bar P,
\bar\sigma,\pertA)$ (with multiplicity) be $\mu_1^\ep\ge \mu_2^\ep\ge\ldots\ge 
\mu_d^\ep$. Then $\mu_i^\ep\to\mu_i$ for each $i$ as $\ep\to 0$.

\item (Convergence in probability of Oseledets spaces)\label{part:spaces}
Let $\tau=\frac1{12}\min_{1\le i<j}(\lambda_i-\lambda_{i+1})$. 
Let $\epsilon_0$ be such 
that $|\mu_i^\ep-\mu_i|<\tau$ for each $i$ for all $\epsilon\le \epsilon_0$.
For $\epsilon<\epsilon_0$, let $Y^\ep_i(\omega)$ denote the sum of the Lyapunov
subspaces having exponents in the range $(\lambda_i-\tau,\lambda_i+\tau)$.
Then $Y^\ep_i(\bar \omega)$ converges in probability to $Y_i(\omega)$ as $\ep\to 0$.

\end{enumerate}
\end{thm}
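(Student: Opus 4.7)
For Part~(\ref{part:expts}), I would establish upper and lower bounds on each partial sum $S_k^\epsilon := \mu_1^\epsilon + \cdots + \mu_k^\epsilon$ at $\epsilon=0$ separately, and then combine them to extract $\mu_k^\epsilon \to \mu_k$. The upper bound $\limsup_{\epsilon\to 0} S_k^\epsilon \le S_k$ is soft: by Kingman's subadditive ergodic theorem,
\[
  S_k^\epsilon \;=\; \inf_n \tfrac{1}{n} \int \log\bigl\|\wedge^k \cocycleps{n}{\bar\omega}\bigr\|\, d\bar\PP,
\]
and for each fixed $n$ the integrand converges (uniformly in $\bar\omega$) to $\log\|\wedge^k \cocycle{n}{\omega}\|$ as $\epsilon\to 0$, so dominated convergence followed by the infimum in $n$ yields the bound.

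The delicate half is the lower bound $\liminf_{\epsilon\to 0} S_k^\epsilon \ge S_k$. I would follow Ledrappier--Young by analysing the transpose cocycle (whose top exponent controls the bottom exponent), but their control on how much a small perturbation can change $\log\|A^{-1}\|$ breaks here because $A(\omega)$ may be singular. The key input to substitute is the absolute continuity of the noise: conditional on all other coordinates, $\pertA_0(\bar\omega)$ has a uniform distribution on $A(\omega)+\epsilon U$, so the probability that its smallest singular value drops below $\delta$ is polynomially small in $\delta/\epsilon$. This makes $\log s_d(\pertA_0)$ integrable (though its integral diverges as $\epsilon\to 0$), and ensures that over $n$ steps the cumulative loss from near-singular factors is tightly controlled on a set of large $\bar\PP$-measure. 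Feeding this into the Ledrappier--Young bookkeeping yields the lower bound. This step is where I expect the main technical obstacle: the noise-density estimate must be pushed through their iterative scheme as a replacement for their deterministic one-step control, and I would need to quantify exactly how the tail of $\log s_d(\pertA_0)$ interacts with the accumulated error over $n(\epsilon) \to \infty$ steps.

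For Part~(\ref{part:spaces}), I would use Part~(\ref{part:expts}) together with the Grassmannian picture flagged in the introduction. The unperturbed flag $V_i(\omega) := Y_1(\omega)\oplus\cdots\oplus Y_i(\omega)$ is the unique $D_i$-dimensional subspace whose forward iterates grow at rates $\ge \lambda_i$, and any subspace transversal to $Y_{i+1}(\omega)\oplus\cdots\oplus Y_p(\omega)$ is attracted to $V_i(\sigma^n\omega)$ under $\cocycle{n}{\omega}$ at rate governed by $\lambda_i-\lambda_{i+1} > 12\tau$. I would make this quantitative on $\mathrm{Gr}(D_i,d)$ by introducing a higher-dimensional cross-ratio on a cone of transversal subspaces and showing that the action induced by $A(\omega)$ is, after a Lyapunov time, a strict contraction in the associated projective (Hilbert-like) metric. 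Once this contraction is in place for the unperturbed cocycle, the choice of $\tau$ combined with the convergence of exponents from Part~(\ref{part:expts}) forces the perturbed subspace $Y_i^\epsilon(\bar\omega)$ to enjoy the same contraction property. Comparing the two cocycles on blocks of length $n(\epsilon)\to\infty$ chosen to balance exponential contraction against the $\epsilon$-error per step --- after discarding a set of small $\bar\PP$-measure where the noise is atypical --- then yields $Y_i^\epsilon(\bar\omega)\to Y_i(\omega)$ in probability. The conceptual hurdle here is constructing the higher-dimensional cross-ratio and verifying its fractional-linear transformation law in a form that survives perturbation; once that projective-metric machinery is available, the remainder is a quantitative comparison argument.
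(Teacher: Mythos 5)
Your outline of Part~(\ref{part:expts}) begins as the paper does --- the upper bound via Kingman and subadditivity is identical --- but the lower-bound mechanism you sketch would fail in exactly the way the paper identifies as the obstruction. Controlling the conditional probability that $s_d(\pertA_0)$ (or any single perturbed factor) is near-singular via absolute continuity of the noise gives a per-step bound of order $|\log\epsilon|$ on the loss in $\Xi_j$, and no good-set truncation repairs this: the loss is incurred on essentially every step, so after averaging over a long orbit the cost per index is still $\Theta(|\log\epsilon|)$, which diverges as $\epsilon\to 0$. The paper's decisive technical input (Lemmas~\ref{lem:linear}--\ref{lem:magicunifbd}) is structurally different: by telescoping $\Xi_j(A_n^\epsilon\cdots A_1^\epsilon)-\Xi_j(A_n\cdots A_1)$ one factor at a time, each increment is $\log\|\Lambda^j(L(A+tM)R)\|-\log\|\Lambda^j(LAR)\|$ for fixed $L,R$ with $t$ the radial noise variable, and since this is the log of a degree-$j$ polynomial norm in $t$, its average against the radial density is bounded below by a \emph{universal} constant $jB$ \emph{independent} of $L,A,R$. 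So the conditional expected loss \emph{per index inside a block} is $O(1)$, not $O(|\log\epsilon|)$; the $O(|\log\epsilon|)$ cost (Lemma~\ref{lem:trans}) is paid only at the two endpoints of each string of bad blocks, which, because block lengths scale like $|\log\epsilon|$, again averages to $O(1)$ per bad-block index. Without this ``integrate the perturbation through the \emph{whole} product, not the single factor'' idea, the accounting does not close.

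Your Part~(\ref{part:spaces}) plan captures the intended geometry --- evolution on $\mathrm{Gr}(D_i,d)$ by fractional linear transformations, a cross-ratio-like invariant, contraction toward the fast flag at rate $\lambda_i-\lambda_{i+1}$ --- and this matches the paper's iterative step on the charts $(\topspacemat j{C_i},\topspacemat j{C_i}^\perp)$, where the higher-dimensional cross ratio appears concretely as the Schur complement $E_n=Z^{(n)}-Y^{(n)}(W^{(n)})^{-1}X^{(n)}$ that decouples the $B_0$-dependence of $B_n$. But your sketch does not address the step that actually carries the probabilistic weight: a contraction argument only helps once the initial subspace is known to lie in the attracting cone, and there is no a priori reason the perturbed fast space $\topspaceDSpert j\oom$ is transversal to the slow directions of the first block --- the bad event is precisely when $W^{(n)}+X^{(n)}B_0$ fails to be invertible. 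The paper resolves this by conditioning on $\omega$ and all $\Delta_k$ with $k\ne-1$, so that $B_0$ acquires $\Delta_{-1}$-randomness, and then comparing the law of the resulting $j\times j$ matrix $\tilde\Delta=D+\epsilon U\Delta V$ with a Gaussian to show $\|\tilde\Delta^{-1}\|$ is controlled off a set of small conditional probability. That absolute-continuity step, not the contraction per se, is the genuinely new content, and your proposal does not supply a substitute for it.
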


\subsection{Outline of the paper}
Section \ref{sec:prelim} introduces terminology, background results and a collection of lemmas that will be used in the proof of the main result. Theorem~\ref{thm:main}\eqref{part:expts} is established in Section~\ref{sec:Exponents}, and part \eqref{part:spaces} is proven in Section~\ref{sec:Spaces}.

\section{Preliminaries}\label{sec:prelim}

For two subspaces, $U$ and $V$, of $\R^d$ of the same dimension, we define
$\angle(U,V)=d_H(U\cap B,V\cap B)$, where $d_H$ denotes Hausdorff
distance and $B$ is the unit ball. For two subspaces $U$ and $W$ of
complementary dimensions, we define $\perp(U,W)=(1/\sqrt
2)\inf_{\{u\in U\cap S,\,w\in W\cap S\}}\|u-w\|$, where $S$ denotes
the unit sphere.  Thus $\perp(U,W)$ is a measure of complementarity of
subspaces, taking values between 0 and 1, with 0 indicating that the
spaces intersect and 1 indicating that the spaces are orthogonal
complements. Note that $\perp(U,V)\geq \perp(U,W)-\angle(W,V)$.

Let $s_j(A)$ denote the $j$th singular value of the matrix $A$ and let
$\Xi^j(A)$ denote $\log s_1(A)+\ldots+\log s_j(A)$. Note that
$\Xi^j(A)=\log \|\Lambda^j A\|$, so that $\Xi^j(AB)\le \Xi^j(A)+\Xi^j(B)$.

The structure of the proof of the main theorem closely follows that of
Ledrappier and Young, in which the orbit of $\omega$ is divided 
into blocks of length
$\approx|\log\epsilon|$. These are classified as good if a number 
of conditions hold (separation
of Lyapunov spaces, closeness of averages to integrals etc.) and bad otherwise.
The crucial modifications that we make are in estimations
for the bad blocks. In the case of \cite{LedrappierYoung}, the matrices 
(and hence their perturbations) have
uniformly bounded inverses, so that for bad blocks one can 
give uniform lower bounds on
the contribution to the singular value. By contrast, here, 
there is no uniform lower bound. Upper bounds are straightforward, so all
of the work is concerned with establishing lower bounds for the exponents. 
Absent the invertibility, a similar argument would yield
(random) bounds of order $\log\epsilon$, which turn out to be too
weak to give the lower bounds that we need.

Given a matrix $A$ with the property that $s_{j+1}(A)<s_j(A)$, we
define $\bottomspacemat jA$ to be the space spanned by the $(j+1)$st to $d$th
singular vectors and $\topspacemat jA$ to be the space spanned by the images of
the 1st to $j$th singular vectors under $A$.
If one has a matrix cocycle with base space $\Omega$ and matrices $A_\omega$,
we use the very similar notation $\bottomspaceDSunpert j\omega$ and 
$\topspaceDSunpert j\omega$ to refer to the Oseledets subspaces. 
The convention will be that if the argument is a matrix, then
they refer to the span of the bottom singular vectors or the 
images of the top singular vectors,
while if the argument is a point of the base space, 
they refer to spaces appearing in the Oseledets
theorem. These spaces are obtained simply as limits of spans of singular vectors, 
as explained in Lemma \ref{lem:SVs} below, justifying the notation.

\begin{lem}[Singular Vectors of blocks of the unperturbed system]
\label{lem:SVs}
  Suppose that the unperturbed system has exponents satisfying
  $\lambda_{j} >\lambda_{j+1}$.

  Then for almost every $\omega$, $\bottomspacemat j{\cocycle n \omega}\to 
  \bottomspaceDSunpert j\omega$ and 
  $\topspacemat j{\cocycle n{\sigma^{-n}\omega}}\to
  \topspaceDSunpert j\omega$ as $n\to\infty$.
\end{lem}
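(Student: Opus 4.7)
The plan is to handle the two convergences separately, with the top case reducing to the bottom case via transposition. Throughout I use the semi-invertible Oseledets splitting $\R^d = Y_1(\omega)\oplus\cdots\oplus Y_p(\omega)$ provided by the MET for the cocycle $A$, and let $k$ be determined by $D_k=j$, which exists since the hypothesis $\lambda_j>\lambda_{j+1}$ forces $j$ to lie at a block boundary of the singular-value spectrum. The candidate limits are then $\bottomspaceDSunpert j\omega = Y_{k+1}(\omega)\oplus\cdots\oplus Y_p(\omega)$ and $\topspaceDSunpert j\omega = Y_1(\omega)\oplus\cdots\oplus Y_k(\omega)$, with the correct dimensions $d-j$ and $j$.

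For the bottom case, I show that every unit vector $v_n\in\bottomspacemat j{\cocycle n\omega}$ has distance to $\bottomspaceDSunpert j\omega$ tending to $0$; since the two subspaces have equal dimension, this upgrades to convergence in the Hausdorff/angle metric. Decompose $v_n = u_n+w_n$ along the splitting with $u_n\in Y_1(\omega)\oplus\cdots\oplus Y_k(\omega)$ and $w_n\in\bottomspaceDSunpert j\omega$. Tempered transversality of the Oseledets splitting gives a measurable $C(\omega)<\infty$ with $\|u_n\|,\|w_n\|\le C(\omega)$. By the MET, $\|\cocycle n\omega v_n\|\le s_{j+1}(\cocycle n\omega)=\exp(n(\lambda_{k+1}+o(1)))$. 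Tempered Lyapunov estimates on the two factors yield $\|\cocycle n\omega w_n\|\le \exp(n(\lambda_{k+1}+o(1)))\|w_n\|$ from above, and $\|\cocycle n\omega u_n\|\ge \exp(n(\lambda_k-o(1)))\|u_n\|$ from below, the latter using that $\cocycle n\omega$ restricts to an isomorphism of $Y_1(\omega)\oplus\cdots\oplus Y_k(\omega)$ onto the corresponding sum at $\sigma^n\omega$, whose smallest singular value has exponent $\lambda_k$. Combining these with $\|\cocycle n\omega v_n\|\ge \|\cocycle n\omega u_n\|-\|\cocycle n\omega w_n\|$ and the gap $\lambda_k>\lambda_{k+1}$ forces $\|u_n\|\to 0$, which is exactly the desired convergence.

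For the top case, I dualise. Setting $B=\cocycle n{\sigma^{-n}\omega}$, the SVD gives $\topspacemat j B = \bigl(\bottomspacemat j{B^T}\bigr)^\perp$, and $B^T = A(\sigma^{-n}\omega)^T\cdots A(\sigma^{-1}\omega)^T$ is the $n$-step product at $\omega$ of the adjoint cocycle over $(\Omega,\PP,\sigma^{-1})$ with matrices $\omega'\mapsto A(\sigma^{-1}\omega')^T$. This adjoint cocycle has the same Lyapunov spectrum, and its semi-invertible Oseledets spaces are related to those of the original cocycle by orthogonal complementation; in particular its analogue of $\bottomspaceDSunpert j\omega$ at $\omega$ equals $(\topspaceDSunpert j\omega)^\perp$. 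Applying the bottom-space statement just proved to the adjoint cocycle yields $\bottomspacemat j{B^T}\to(\topspaceDSunpert j\omega)^\perp$, and taking orthogonal complements gives $\topspacemat j B\to\topspaceDSunpert j\omega$.

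The main obstacle I anticipate is the uniform-in-$n$ control of the decomposition $v_n=u_n+w_n$: the Oseledets splitting is only measurable and its factors are merely tempered-transverse, so to convert the pointwise exact growth rates on each $Y_i(\omega)$ into the simultaneous upper and lower bounds on $\|\cocycle n\omega w_n\|$ and $\|\cocycle n\omega u_n\|$ used above, one must invoke tempered angle bounds between the Oseledets subspaces, or equivalently a Pesin-type Lyapunov norm adapted to the splitting. A secondary technical point is verifying the dual identification of the adjoint cocycle's Oseledets spaces in the semi-invertible setting, which is implicit in the singular-vector-limit constructions of \cite{FLQ1,GTQuas}.
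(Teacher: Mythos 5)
Your proposal follows essentially the same route as the paper: handle the bottom spaces $E_j$ directly via growth rates along the Oseledets splitting (the paper delegates this to the proof of the Oseledets theorem in Arnold's book), and reduce the top spaces $F_j$ to the bottom-space statement applied to the adjoint cocycle over $\sigma^{-1}$, using the SVD identity $\topspacemat j{\cocycle n{\sigma^{-n}\omega}} = \bigl(\bottomspacemat j{\cocyclestar n{\sigma^{-n}\omega}}\bigr)^\perp$ and then passing to the limit.

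However, you leave a genuine gap exactly where the paper does its real work: the identification $\topspaceDSunpert j\omega = (\bottomspacestarDSunpert j\omega)^\perp$ of the unperturbed fast space with the orthogonal complement of the adjoint cocycle's slow space. You assert this as known and defer it as a ``secondary technical point implicit in [FLQ1, GTQuas],'' but it is the mathematical content of the lemma, and it is not an off-the-shelf fact in the semi-invertible setting. The paper proves it by a contradiction: assuming a nonzero pairing $\langle f(\omega), e^*(\omega)\rangle\neq 0$ with $f\in\topspaceDSunpert j\omega$ and $e^*\in\bottomspacestarDSunpert j\omega$, one uses the invertibility of $\cocycle n{\sigma^{-n}\omega}$ restricted to the fast subspaces (a specifically semi-invertible-MET fact) to pull $f$ back to unit vectors $f^{(-n)}$, then compares the two sides of $\langle \cocycle n{\sigma^{-n}\omega} f^{(-n)}, e^*\rangle = \langle f^{(-n)}, \cocyclestar n{\sigma^{-n}\omega} e^*\rangle$. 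The right side grows strictly slower than $e^{n\lambda_j}$ because $e^*$ is in the dual slow space; the left side grows at rate at least $\lambda_j$ by the lower bound in \cite[Eq.~(18)]{FLQ1} for the fast subspace. That contradiction is the step missing from your proposal. Once that identification is supplied, the rest of your argument (including the growth-rate argument for the bottom space, which is more explicit than the paper's citation and correctly anticipates the need for tempered/Lyapunov norms to make the estimates uniform) is sound.
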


\begin{proof}
The statement that $\bottomspacemat j{\cocycle n{\omega}}\to 
\bottomspacemat j\omega$
follows from the proof of Oseledets' theorem given in \cite{ArnoldRDSBook}.
The singular value decomposition ensures that 
$\topspacemat j{\cocycle n{\sigma^{-n}\omega}}=
\Big( \bottomspacemat j{\cocyclestar n{\sigma^{-n}\omega}} \Big)^\perp$, 
where $A^*$ denotes the adjoint of $A$.

On the other hand, a similar statement is true for the spaces 
$\bottomspacestarDSunpert j\omega$ and $\topspaceDSunpert j\omega$. 
More precisely, we claim that if we let $\bottomspacestarDSunpert j\omega$ 
be the Oseledets spaces 
for the dual cocycle with base $\sigma^{-1}$ and generator 
$G(\omega)={A(\sigma^{-1}\omega)}^*$, then
$\topspaceDSunpert j\omega=(\bottomspacestarDSunpert j\omega)^\perp$.
Applying Oseledets' theorem to the dual cocycle, we obtain
$\bottomspacemat j{\cocyclestar n{\sigma^{-n}\omega}}\to 
\bottomspacestarDSunpert j\omega$.

To prove the claim, suppose for a contradiction that there exist
$f(\omega) \in \topspaceDSunpert j\omega$ 
and $e^*(\omega) \in \bottomspacestarDSunpert j\omega$ of unit length such that 
$\langle f(\omega), e^*(\omega) \rangle\neq 0$.
By invertibility of $\cocycle n{\sigma^{-n}\omega}$ as a map from  
$\topspaceDSunpert j{\sigma^{-n}\omega}$ to $\topspaceDSunpert j \omega$
there exist for all $n$, unit vectors
$f^{(-n)}(\omega) \in \topspaceDSunpert j{\sigma^{-n}\omega}$ such that 
$\cocycle n{\sigma^{-n}\omega} f^{(-n)}(\omega)$ is a multiple of $f(\omega)$.
Then,
\begin{equation}\label{eq:duality}
 \langle \cocycle n{\sigma^{-n}\omega} f^{(-n)}(\omega), e^*(\omega) \rangle = 
 \langle f^{(-n)}(\omega),  \cocyclestar n{\sigma^{-n}\omega} e^*(\omega)
 \rangle. 
\end{equation}
The right hand side grows at a rate slower than $\lambda_j$.
The left hand side grows at a rate at least $\lambda_j$, by  \cite[Eq.~(18)]{FLQ1}.
This yields a contradiction, so 
 $\topspaceDSunpert j\omega\subset 
 \big(\bottomspacestarDSunpert j\omega\big)^\perp$.
Since the dimensions agree, they coincide.

Now the  statement that $\topspacemat j{\cocycle n{\sigma^{-n}\omega}}\to
\topspaceDSunpert j\omega$ follows directly from the fact that 
$\bottomspacemat j{\cocyclestar n{\sigma^{-n}\omega}}\to  
\bottomspacestarDSunpert j\omega$,
and continuity of $V\mapsto V^\perp$.

\end{proof}

\begin{lem}\label{lem:LY}[Lemmas 3.3, 3.6 \& 4.3, \cite{LedrappierYoung}] 
  For any $\delta>0$, there exists a $K$ such that if (i) the ratio of the
  $j$th to $(j+1)$st singular values of a matrix $A$ exceeds $K$; (ii)
  the $j$th singular value exceeds $K$; and (iii) $\|B-A\|\le 1$, then the
  following hold:
  \begin{enumerate}[(a)]
  \item
  $\angle (\topspacemat jA,\topspacemat jB)$ and 
  $\angle(\bottomspacemat jA,\bottomspacemat jB)$ are less than
    $\delta/3$;\label{it:spacecont}
  \item
    $\frac13\le s_i(A)/s_i(B)\le 3$ for each $i\le j$;
  \item If $V$ is any subspace of dimension $j$ such that
  $\perp(V, \bottomspacemat jA)>\delta/6$, then 
  $\angle(BV,\topspacemat jA)<\delta/3$;\label{it:contr}
  \item If $V$ is a subspace of dimension $j$ and
    $\perp(V,\bottomspacemat jA)>\delta$, then $|\det(A|V)|\ge
   (D\delta)^j\exp\Xi_j(A)$, where $D$ is an absolute constant.\label{it:det}
  \end{enumerate}
\end{lem}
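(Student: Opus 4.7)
The plan is to treat the four assertions in turn, relying on standard singular-value perturbation theory for (a), (b), and on a direct SVD decomposition for (c), (d). Throughout, $K$ is chosen large enough that a finite list of smallness conditions, emerging as the argument progresses, all hold simultaneously.

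Part (b) is immediate: Weyl's perturbation inequality for singular values gives $|s_i(A)-s_i(B)|\le\|A-B\|\le 1$ for each $i$, and since $s_i(A)\ge s_j(A)>K$ for $i\le j$, taking $K\ge 3/2$ places the ratios $s_i(A)/s_i(B)$ in $[1/3,3]$. For part (a), I would invoke Wedin's $\sin\Theta$ theorem (the singular-vector analogue of Davis--Kahan). Under the hypotheses, the top-$j$ and bottom-$(d-j)$ singular values of $A$ are separated by a gap at least $s_j(A)-s_{j+1}(A)>K(1-1/K)$, and by (b) the corresponding gap for $B$ is comparable. Wedin's bound then yields $\angle(\topspacemat jA,\topspacemat jB)$ and $\angle(\bottomspacemat jA,\bottomspacemat jB)$ each of order $\|A-B\|/K\le 1/K$, which is below $\delta/3$ for $K$ large.

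Part (d) I would prove by expanding $|\det(A|V)|^2$ via Cauchy--Binet. Let $C$ be the $j\times d$ matrix of coordinates of an orthonormal basis of $V$ in the right-singular basis of $A$, and $D=\mathrm{diag}(s_1(A)^2,\ldots,s_d(A)^2)$. Then $|\det(A|V)|^2=\det(CDC^T)=\sum_{|I|=j}(\det C_I)^2\prod_{i\in I}s_i(A)^2$, a sum of nonnegative terms. Keeping only the index set $I=\{1,\ldots,j\}$ gives $|\det(A|V)|^2\ge(\det C_{[1,j]})^2\exp(2\,\Xi^j(A))$. The factor $|\det C_{[1,j]}|$ is the $j$-volume of the orthogonal projection of a $V$-basis onto the top-$j$ right singular space of $A$, i.e.\ the product of cosines of principal angles between $V$ and that subspace. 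Each such cosine exceeds a universal multiple of $\perp(V,\bottomspacemat jA)>\delta$ (from the half-angle relation $\perp=\sqrt{2}\sin(\theta_{\min}/2)$ and the fact that the principal angles with the complementary subspace are complementary), absorbing into the constant $D$.

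The step I expect to be the main obstacle is part (c), because the conclusion involves $\topspacemat jA$ rather than $\topspacemat jB$, so I cannot simply apply (a). I would decompose each unit $v\in V$ as $v=u+w$ with $u$ lying in the top-$j$ right singular space of $A$ and $w\in\bottomspacemat jA$. Transversality of $V$ to $\bottomspacemat jA$ forces $\|u\|\gtrsim\delta$, whence $Au\in\topspacemat jA$ has norm at least of order $s_j(A)\delta$, while the deviation $Bv-Au=Aw+(B-A)v$ has norm at most $s_{j+1}(A)+1\le s_j(A)/K+1$. The ratio of deviation to dominant part is thus at most $1/(K\delta)+1/(s_j(A)\delta)$, both of which vanish as $K\to\infty$, giving the required bound $\angle(BV,\topspacemat jA)<\delta/3$ after taking the supremum over unit $v\in V$ and passing to the associated Hausdorff distance on the unit ball.
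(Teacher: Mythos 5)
The paper does not prove this lemma; it simply cites Lemmas 3.3, 3.6 and 4.3 of Ledrappier--Young and imports the statement. You have instead given a self-contained proof from standard linear algebra, which is a genuinely different route. Your argument is essentially sound: Weyl's inequality handles (b) immediately; Wedin's $\sin\Theta$ theorem is exactly the right tool for (a) given the large singular gap $s_j(A)-s_{j+1}(A) > K(1-1/K)$ and the perturbation bound $\|A-B\|\le 1$; the Cauchy--Binet expansion keeping only the index set $\{1,\ldots,j\}$ is a clean way to get (d), and the conversion from $\perp(V,\bottomspacemat jA)>\delta$ to a lower bound $\cos\theta_1>\delta$ on the smallest cosine in the projection onto $(\bottomspacemat jA)^\perp$ does hold, via $\perp(V,E)=\sqrt{1-\sin\theta_1}$. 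Your treatment of (c) correctly identifies it as the subtle point (since the conclusion compares $BV$ to $\topspacemat jA$ rather than to $\topspacemat jB$), and the decomposition $v=u+w$ with $\|u\|>\delta/6$, $\|Bv-Au\|\le s_j(A)/K + 1$, and $\|Au\|\ge s_j(A)\delta/6$ gives the per-vector bound on $d(Bv/\|Bv\|,\topspacemat jA)$.

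One step you pass over quickly is the lift from the per-vector estimate in (c) to the Hausdorff distance $\angle(BV,\topspacemat jA)$ between unit balls, which is a two-sided quantity: you have one side, and the other side needs that $BV$ is genuinely $j$-dimensional (it is, since $\|Bv\|\gtrsim s_j(A)\delta/6>0$ for unit $v\in V$) together with the standard fact that for two subspaces of equal dimension, a one-sided $\epsilon$-closeness of unit balls with $\epsilon<1$ implies two-sided $O(\epsilon)$-closeness. Worth spelling out, but it is routine. The upshot is that your proposal is correct, and arguably more useful to a reader than the paper's bare citation, since it makes the (mild) adaptation from Ledrappier--Young's setting explicit.
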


\begin{lem}\label{lem:Clogeps}
Let $\sigma$ be an ergodic measure-preserving transformation of $(\Omega,\PP)$
and let $A\colon\Omega\to M_{d\times d}(\R)$ be a measurable map such that
$\log^+\|A(\om)\|$ is integrable. There exists $C$ such that for all
$\eta_0>0$, there exists $\ep_0$ such that for all $\epsilon<\epsilon_0$,
there exists $G\subseteq\Omega$ of measure at least $1-\eta_0$
such that for all $\omega\in G$, and all 
$(\Delta_n)\in U^\Z$
\begin{equation*}
\|\cocycleps N{(\om,(\Delta_n))}-\cocycle N\om\|\le1,
\end{equation*}
where $N=\lfloor C|\log\ep|\rfloor$.
\end{lem}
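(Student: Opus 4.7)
The plan is to use a telescoping identity to write $\cocycleps N{\bar\omega} - \cocycle N\omega$ as a sum of $N$ matrix products, each containing exactly one factor of $\epsilon\Delta_i$, then derive a pointwise bound in terms of $\exp\big(\sum_{j=0}^{N-1}\log^+\|A(\sigma^j\omega)\|\big)$, and finally apply Birkhoff plus Egorov to this Birkhoff sum to control it uniformly on a set of measure at least $1-\eta_0$. Set $L = \int\log^+\|A\|\,d\PP$ (finite by hypothesis) and choose, once and for all depending only on $L$, a small constant $C$, for instance $C = \tfrac{1}{2(\log 2 + L + 1)}$.

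Write $A_i = A(\sigma^i\omega)$ and $\tilde A_i = A_i + \epsilon\Delta_i$. A standard telescoping identity (comparing partial products $\tilde A_{N-1}\cdots \tilde A_i A_{i-1}\cdots A_0$ as $i$ decreases from $N$ to $0$) gives
\begin{equation*}
\cocycleps N{\bar\omega} - \cocycle N\omega
\;=\; \sum_{i=0}^{N-1} \tilde A_{N-1}\cdots \tilde A_{i+1}\,(\epsilon\Delta_i)\,A_{i-1}\cdots A_0.
\end{equation*}
Using $\|\Delta_i\|\le 1$, the bound $\|\tilde A_j\|\le \|A_j\|+\epsilon \le 2\max(1,\|A_j\|)$ (valid once $\epsilon\le 1$), and $\|A_j\|\le \max(1,\|A_j\|)$, the triangle inequality yields the pointwise estimate
\begin{equation*}
\|\cocycleps N{\bar\omega} - \cocycle N\omega\| \;\le\; N\,\epsilon\,2^N \exp\!\Big(\textstyle\sum_{j=0}^{N-1}\log^+\|A(\sigma^j\omega)\|\Big),
\end{equation*}
which is \emph{uniform} in $(\Delta_n)\in U^\Z$, so it is enough to find a suitable set $G\subseteq\Omega$ of base points.

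By Birkhoff's ergodic theorem applied to the $\PP$-integrable function $\log^+\|A\|$, for $\PP$-a.e.\ $\omega$ the sample mean $\frac1N\sum_{j=0}^{N-1}\log^+\|A(\sigma^j\omega)\|$ converges to $L$. Given $\eta_0>0$, Egorov's theorem produces an $N_0$ and a set $G$ with $\PP(G)\ge 1-\eta_0$ on which this sample mean is at most $L+1$ for all $N\ge N_0$. Choose $\epsilon_0\le 1$ small enough that $N := \lfloor C|\log\epsilon|\rfloor\ge N_0$ for all $\epsilon\le\epsilon_0$. For $\omega\in G$ and $\epsilon\le\epsilon_0$, the preceding estimate becomes
\begin{equation*}
\|\cocycleps N{\bar\omega} - \cocycle N\omega\| \;\le\; C|\log\epsilon|\cdot\epsilon\cdot\exp\!\big(C|\log\epsilon|\,(\log 2 + L + 1)\big) \;=\; C|\log\epsilon|\cdot\epsilon^{1/2},
\end{equation*}
where the calibration $C(\log 2 + L + 1) = 1/2$ is precisely what converts the exponential in $N$ into a positive power of $\epsilon$. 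Shrinking $\epsilon_0$ once more ensures the right-hand side is at most $1$.

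The only real subtlety is the reduction of the naive bound $\prod_j(\|A_j\|+\epsilon)$, over which we have no integrable control (since $\log\|A\|$ need not be integrable, only $\log^+\|A\|$), to the quantity $\exp(\sum_j\log^+\|A_j\|)$, which \emph{is} amenable to Birkhoff. The innocuous replacement of each $\|A_j\|$ by $\max(1,\|A_j\|)$ at the cost of a factor $2^N$ is what makes this possible, and then the freedom to take $N$ of order $|\log\epsilon|$ with a sufficiently small coefficient $C=C(L)$ absorbs the resulting exponential growth into a power of $\epsilon$.
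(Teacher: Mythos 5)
Your proof is correct and follows essentially the same route as the paper's: a telescoping decomposition of $\cocycleps N{\oom}-\cocycle N\om$, a pointwise bound uniform in $(\Delta_n)$ by an exponential of a Birkhoff sum of an integrable function, and then Birkhoff plus Egorov to control that sum on a large set, with $C$ calibrated against $\int\log^+\|A\|$ so that the exponential growth in $N$ is absorbed into a power of $\epsilon$. The only cosmetic difference is that the paper absorbs the additive $\epsilon$ by using $g(\om)=\log^+(\|A_\om\|+1)$ and choosing $C$ with $\int g<1/C$, whereas you replace $\|A_j\|$ by $\max(1,\|A_j\|)$ at the cost of a $2^N$ factor; the two devices play the same role.
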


\begin{proof}
Let $g(\om)=\log^+(\|A_\om\|+1)$ and let $C>0$ satisfy $\int g(\om)
\,d\PP(\om)<1/C$.
Notice that provided $\epsilon<1$ (and using the fact that the perturbations
have norm bounded by $\epsilon$), $\log^+ \|A_{\bar\om }^\ep\|\leq g(\om)$, and 
\begin{align*}
\|\cocycleps N\oom-\cocycle N\om\|&\le\sum_{i=0}^{N-1}
\|\cocycleps {N-i-1}{\bar\sigma^{i}\oom}(\pertA_{\bar\sigma^i\oom}-A_{\sigma^i\om})
\cocycle i\om\|\\
&\le N\ep\exp(g(\om)+\ldots+g(\sigma^{N-1}\om)).
\end{align*}
There exists $n_0$ such that for $N\ge n_0$, 
$N\ep\exp(g(\om)+\ldots+g(\sigma^{N-1}\om))
\le \ep \exp(N/C)$ on a set of measure at least $1-\eta_0$.
In particular, provided $\lfloor C|\log\ep_0|\rfloor>n_0$, 
taking $N=\lfloor C|\log\ep|\rfloor$, the conclusion follows.

\end{proof}

\begin{lem}\label{lem:goodblocks}
Let $\sigma$ be an ergodic measure-preserving transformation of $(\Omega,\PP)$ and
let $A\colon\Omega\to M_{d\times d}(\R)$ be a 
measurable map such that $\int\log^+\|A(\om)\|
\,\PP(\om)<\infty$. Let the Lyapunov exponents 
(with multiplicity) be $\infty>\mu_1\ge
\ldots\ge \mu_d\ge-\infty$. Suppose further that 
$\mu_j>\max(0,\mu_{j+1})$.

Let $\eta_0>0$ and $\delta_1>0$ be given. Then there exist $n_0>0$, $\kappa>0$ and
$\delta\le\min(\delta_1,\kappa)$
such that: for all $n\ge n_0$, there exists a set $G\subseteq \Omega$ with 
$\mathbb P(G)>1-\eta_0$ such that for $\omega\in G$, we have

\begin{enumerate}[(a)]
\item
$\perp(\bottomspaceDSunpert j\omega,\topspaceDSunpert j\omega)>
10\kappa$;\label{it:sep}
\item
$\angle(\topspacemat j{\cocycle n{\omega}},
\topspaceDSunpert j{\sigma^n\omega})<\delta$;\label{it:Fcont}
\item 
$\angle(\bottomspacemat j{\cocycle n\omega},\bottomspaceDSunpert j\omega
)<\delta$;\label{it:Econt}
\item
  $s_j(\cocycle n\omega)/s_{j+1}(\cocycle n\omega)>K(\delta)$ and
  $s_j(\cocycle n\omega)>K(\delta)$,
  where $K(\delta)$
  is as given in Lemma \ref{lem:LY}\label{it:svsep}. 
\end{enumerate}
\end{lem}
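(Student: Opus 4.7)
The plan is to establish each of the four conditions \eqref{it:sep}--\eqref{it:svsep} as an almost-sure statement about the unperturbed cocycle, then upgrade each to a uniform statement on a set of large measure via Egorov's theorem, and finally intersect the resulting sets. The parameters $\kappa$ and $\delta$ can be chosen at the outset from the unperturbed Oseledets data alone.

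First I would select $\kappa$. Since $\mu_j > \mu_{j+1}$, there is a gap between consecutive distinct Lyapunov exponents at position $j$, so the semi-invertible Oseledets theorem provides a splitting with complementary subspaces $\bottomspaceDSunpert{j}{\omega}$ and $\topspaceDSunpert{j}{\omega}$, and in particular $\perp(\bottomspaceDSunpert{j}{\omega}, \topspaceDSunpert{j}{\omega}) > 0$ almost surely. I would pick $\kappa > 0$ small enough that $G^{(a)} := \{\omega : \perp(\bottomspaceDSunpert{j}{\omega}, \topspaceDSunpert{j}{\omega}) > 10\kappa\}$ has measure at least $1 - \eta_0/4$, and set $\delta = \min(\delta_1, \kappa)$. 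This fixes the threshold $K(\delta)$ from Lemma \ref{lem:LY}.

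Conditions \eqref{it:Econt} and \eqref{it:Fcont} both follow from Lemma \ref{lem:SVs}. For \eqref{it:Econt}, Lemma \ref{lem:SVs} gives $\bottomspacemat{j}{\cocycle{n}{\omega}} \to \bottomspaceDSunpert{j}{\omega}$ almost surely, so Egorov's theorem produces a set $H_1$ with $\PP(H_1) > 1 - \eta_0/4$ and an integer $n_1$ such that the angle is below $\delta$ for all $n \ge n_1$ and all $\omega \in H_1$. Condition \eqref{it:Fcont} is slightly more delicate because the base point $\sigma^n\omega$ varies with $n$. The remedy is a change of variables: write $\omega' = \sigma^n\omega$, so that $\cocycle{n}{\omega} = \cocycle{n}{\sigma^{-n}\omega'}$, and invoke the second half of Lemma \ref{lem:SVs}, namely $\topspacemat{j}{\cocycle{n}{\sigma^{-n}\omega'}} \to \topspaceDSunpert{j}{\omega'}$ almost surely. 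Egorov provides a set $H_2$ and $n_2$ such that the angle is below $\delta$ uniformly on $H_2$ for $n \ge n_2$; by invariance of $\PP$, the preimage $\sigma^{-n}H_2$ has the same measure as $H_2$, and \eqref{it:Fcont} holds on it.

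For \eqref{it:svsep} I would invoke the conclusion of the semi-invertible MET on singular values: $\tfrac{1}{n}\log s_j(\cocycle{n}{\omega}) \to \mu_j$ and $\tfrac{1}{n}\log s_{j+1}(\cocycle{n}{\omega}) \to \mu_{j+1}$ almost surely (the second limit interpreted as $-\infty$ when $\mu_{j+1}=-\infty$, in which case $s_{j+1}$ may vanish and the ratio is taken to be $+\infty$). Because $\mu_j > 0$ and $\mu_j > \mu_{j+1}$, both $s_j(\cocycle{n}{\omega})$ and $s_j/s_{j+1}$ eventually exceed any prescribed constant; a final application of Egorov yields $H_3$ with $\PP(H_3) > 1 - \eta_0/4$ and $n_3$ such that both quantities exceed $K(\delta)$ on $H_3$ for all $n \ge n_3$. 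Setting $n_0 = \max(n_1, n_2, n_3)$ and $G = G^{(a)} \cap H_1 \cap \sigma^{-n}H_2 \cap H_3$ then gives $\PP(G) > 1 - \eta_0$ as required. The only substantive subtlety in the entire argument is the handling of \eqref{it:Fcont} with its moving base point, which is resolved by the change of variables $\omega' = \sigma^n\omega$ together with measure preservation; every other step is a routine application of Egorov's theorem to a known almost-sure convergence.
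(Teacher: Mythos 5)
Your proposal is correct and follows essentially the same route as the paper's proof: choose $\kappa$ and $\delta$ from the Oseledets data, obtain large-measure sets with uniform bounds for each of (a)--(d) from the corresponding almost-sure statements (the paper writes this as ``from the proof of Oseledets' theorem'' rather than explicitly invoking Egorov, but it is the same mechanism), handle the moving base point in (b) via the substitution $\omega'=\sigma^n\omega$ and $\sigma$-invariance of $\PP$, and intersect the four sets after allocating measure budget $\eta_0/4$ to each.
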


\begin{proof}
From the proof of Oseledets' theorem, we know $\perp(
\bottomspaceDSunpert j\omega,\topspaceDSunpert j\omega)$ is a positive
measurable function. Hence there exists $\kappa>0$ such that
\eqref{it:sep} occurs on a set of measure at least $1-\eta_0/4$.
Let $\delta=\min(\delta_1,\kappa)$.

From the proof of Oseledets' theorem, there exists an $n_1>0$ such that 
for all $n\ge n_1$, 
$\angle(\topspacemat j{\cocycle n{\sigma^{-n}\omega}},
\topspaceDSunpert j{\omega})<\delta$
and
$\angle(\bottomspacemat j{\cocycle n\omega},\bottomspaceDSunpert j\omega
)<\delta$ hold on sets of measure at least $1-\eta_0/4$. 
Hence there is a set of measure at least $1-\eta_0/4$ where \eqref{it:Econt}
holds. Similarly, using shift-invariance, there is a set of measure at least
$1-\eta_0/4$ where \eqref{it:Fcont} holds.

Since $\frac1n\log s_j(\cocycle n\om)\to\mu_j$ and
$\frac1n\log s_{j+1}(\cocycle n\om)\to\mu_{j+1}$, \eqref{it:svsep} 
holds on a set of measure at least $1-\eta_0/4$ for all $n\ge n_2$ for some $n_2>0$.
Now let $n\ge n_0=\max(n_1,n_2)$. Intersecting the above sets gives a set $G$
satisfying the conclusions of the lemma.
\end{proof}

\section{Convergence of Lyapunov exponents}\label{sec:Exponents}

\begin{proof}[Proof of Theorem \ref{thm:main}(\ref{part:expts})]

Most of the work in this part is concerned with showing the inequality
\begin{equation}
\liminf_{\ep\to 0}(\mu_1^\ep+\ldots+\mu_{D_i}^\ep)\ge \mu_1+\ldots+\mu_{D_i}
\text{,  for any $1\le i\le p$}.\label{eq:liminf}
\end{equation}
We also prove
\begin{equation}
\limsup_{\ep\to 0}(\mu_1^\ep+\ldots+\mu_j^\ep)\le \mu_1+\ldots+\mu_j \text{ for any 
$1\le j\le d$}\label{eq:limsup}
\end{equation}
which is fairly straightforward using sub-additivity. These facts,
combined with the fact that the $\mu_i^\ep$
and $\mu_i$ are decreasing in $i$ are sufficient to establish the claim
that $\mu_i^\ep\to\mu_i$ for each $i$. 

To see this, suppose that \eqref{eq:liminf} and \eqref{eq:limsup} hold. 
Let $h_j=\mu_1+\ldots+\mu_j$ and let $H_j(\epsilon)=
\mu_1^\ep+\ldots+\mu_j^\ep$. By \eqref{eq:liminf} and \eqref{eq:limsup}, 
we have $\lim_{\ep\to0}H_{D_i}(\ep)=h_{D_i}$. If $\lambda_{i+1}=-\infty$, we see
$\lim_{\ep\to 0}\mu_j^\ep=-\infty$ for all $j>D_i$ from \eqref{eq:limsup}. 
Hence we may assume that $\lambda_{i+1}>-\infty$. Since the exponents are arranged
in decreasing order, $(H_j(\ep))_{j=1}^d$ is a `concave' sequence for each
$\ep$ (that is $H_{j+1}(\ep)-H_j(\ep)\le H_j(\ep)-H_{j-1}(\ep)$
 for each $j$ in range),
as is $(h_j)_{j=1}^d$. However, $h_j$ is an arithmetic
progression for $j$ in the range
$D_i$ to $D_{i+1}-1$. Since a concave function is bounded below by its secant,
we deduce $\liminf_{\ep\to 0} H_j(\ep)\ge h_j$ for $D_i\le j<D_{i+1}$. Hence we
see $H_j(\ep)\to h_j$ as $\ep\to 0$ for each $j$, from which the statement 
follows.

To show \eqref{eq:limsup}, let $\chi>0$ and let $1\le j\le d$. By the sub-additive
ergodic theorem, there exists an $N>0$ such that $\int \Xi_j(A^{(N)}_\om)\,d\PP
<h_j+\chi/2$. Now for sufficiently small $\epsilon>0$, $\Xi_j(\cocycleps N\oom)
<\Xi_j(\cocycle N\om)+\chi/2$ for all $\oom\in\bar\Omega$. In particular, this
shows that $H_j(\ep)<h_j+\chi$ for all sufficiently small $\ep$ as required.
Notice that this part of the argument is completely general, whereas the lower
bound depends on the particular properties of the matrix perturbations.

We now focus on proving \eqref{eq:liminf}. Let $j=D_i$, noting that by the above,
we may assume that $\lambda_i>-\infty$.
By multiplying the entire family of
matrices by a positive constant, we may assume that $\mu_j=\lambda_i\ge 0$.

Let $\chi>0$ be arbitrary. 
Let $D$ be the absolute constant occurring in the statement of
Lemma \ref{lem:LY},
$C$ be as in the statement of Lemma \ref{lem:Clogeps} and $K$ be the constant occurring
in the statement of Lemma \ref{lem:trans}. Define a constant $\eta>0$ by
\begin{equation}\label{eq:etachoice}
\eta=\min\left(\frac{\chi}{4\times 1.28d^2j},\frac{\chi C}{8K}\right).
\end{equation}

Let $n_0$, $\kappa$ and $\delta$ be the quantities
given by Lemma \ref{lem:goodblocks} using $\delta_1=\frac12$ and
$\eta_0=\eta/2$. 
Since $\int\log\|A_\om\|\,d\bbp(\om)<\infty$,
it follows that $\int\Xi_j^+(A_\om)\,d\bbp(\om)<\infty$.

Let $N(\ep)=\lfloor C|\log\epsilon|\rfloor$, where $C$ is as above.
The fact that $N$ scales like 
$|\log\epsilon|$ will be of crucial importance later.
Let $\epsilon_0$ be the quantity appearing in Lemma \ref{lem:Clogeps}
with $\eta_0$ taken to be $\eta/2$.

Let $\epsilon$ be sufficiently small that
\begin{equation}\label{eq:Ncond}
\begin{split}
N(\ep)&>\frac{4j\log(3/(\delta D))}{\chi},\\
\frac{|\log\ep|}{N(\ep)}&<2/C,\\
N(\ep)&>\frac{8}\chi\int\Xi_j^+(A_\om)\,d\bbp(\om),\\
\ep&<\ep_0. 
\end{split}
\end{equation}

Let $G$ be the intersection of the good set given by Lemma \ref{lem:Clogeps}
with the good set given by Lemma \ref{lem:goodblocks} with 
$n$ taken to be $N=N(\ep)$, so that $\PP(G)>1-\eta$. If $\omega\in G$, we say
the matrix product $A_{\sigma^{N-1}\omega}\cdots A_\omega$ is a
\emph{good block}.

Now we divide everything into blocks of length $N$ and estimate the
sum of the logarithms of the first $j$ singular values of the
$\epsilon$-perturbed cocycle.

We will bound from above
the difference between the sum of the logs
of the first $j$ singular values in the unperturbed system 
and this sum in the perturbed version. We informally speak of the \emph{costs}
due to various contributions. That is, estimates of various contributions to an
upper bound for the difference
(\textsl{unperturbed})$-$(\textsl{perturbed}).
These costs are estimated in the following parts.

\begin{enumerate}[i.]
\item To deal with the concatenation of good blocks,
we give an upper bound for the difference 
(\textsl{sum of individual block exponents})$
-$(\textsl{exponent of concatenated block}). 
This is estimated using Lemma \ref{lem:LY}.
Over the whole block there is a cost of at most $\log (3/D\delta)$, so
a cost per index of $O(1/|\log\ep|)$. 
\item Reduction of singular values within bad blocks. There is an
  expected cost of at worst $1.28d^2j$ per index in a bad block
  from \eqref{eq:intbadblox}.
\item Reduction of singular values at the first and last matrix of a string of
 bad blocks. Here, there is an upper bound in expected cost of approximately
  $|\log\epsilon|$ per bad
  block. Here is where it is crucial that the blocks are of length
  $O(|\log\epsilon|)$. The upper bound for the cost averages 
  out at $O(1)$ per index 
  in each bad block. The argument is saved by the fact that most blocks are good blocks.
\end{enumerate}
The sum of the costs is $O(\eta)+O(1/|\log\ep|)$ per index
($\eta$ being the frequency 
of bad blocks), 
which will allow us to derive \eqref{eq:liminf}.
Let us proceed with the details.

\begin{step}
  A lower bound for $\Xi_j$ for concatenations of good blocks.
\end{step}
Suppose $k<l$ and 
$\sigma^{kN}\omega,\sigma^{(k+1)N}\omega,\ldots,\sigma^{(l-1)N}\omega\in
G$.  Let $B_n=\cocycle N{\sigma^{nN}\omega}$ and $\tilde
B_n=\cocycleps N{\sigma^{nN}\omega}$ We then claim that
\begin{equation}\label{eq:goodblox}
\begin{split}
  \Xi_j(\tilde B_{l-1}\cdots \tilde B_k)&\ge
  \sum_{i=k}^{l-1}\Xi_j(B_i)+(l-k)j\log\delta-(l-k)j\log(3/D)\\
  	&\ge \Xi_j(B_{l-1}\cdots B_k)+(l-k)j\log(D\delta/3),
\end{split}
\end{equation}
where $D$ is the absolute constant appearing in Lemma \ref{lem:LY}.

This is proved inductively using Lemma \ref{lem:LY}. Recall that 
$\|B_n - \tilde{B}_n\|\leq 1$. We let
$\tilde V_k=V_k=\bottomspacemat j{B_k}^\perp$ and define $V_{n+1}=B_nV_n$ and 
$\tilde V_{n+1}=\tilde B_n\tilde V_n$. 

We claim that the following hold:
\begin{enumerate}[i.]
\item $\angle(V_{n},\tilde V_{n})<\delta$ for each $n$; \label{it:ind1}
\item $\perp(V_n,\bottomspacemat j{B_n})>\delta$ and 
$\perp(\tilde V_n,\bottomspacemat j{\tilde B_n})>\delta$ for each $n$.
\label{it:ind2}
\end{enumerate}

Item \eqref{it:ind1} and the first part of \eqref{it:ind2} hold immediately 
for the case $n=k$.
The second part of \eqref{it:ind2} holds because $\tilde V_k=V_k=
\bottomspacemat j{B_k}^\perp$ and 
$\angle(\bottomspacemat j{B_k},\bottomspacemat j{\tilde B_k})<\delta$
 by Lemma \ref{lem:LY}.

Given that \eqref{it:ind1} and \eqref{it:ind2} hold for $n=m$
 and that $B_m$ is a good block,
Lemma~\ref{lem:LY} implies that $\angle(V_{m+1},\topspacemat j{B_{m}})<\delta/3$, 
$\angle(\tilde V_{m+1},\topspacemat j{\tilde B_{m}})<\delta/3$
and $\angle(\topspacemat j{B_{m}},\topspacemat j{\tilde B_{m}})<\delta/3$, 
so that $\angle(\tilde V_{m+1},V_{m+1})<\delta$, yielding
\eqref{it:ind1} for $n=m+1$.

Finally, by the induction hypothesis and Lemma~\ref{lem:LY}, we have
$\angle(\topspaceDSunpert j{\sigma^{(m+1)N}\omega},
\topspacemat j{B_{m}})<\delta$, and by Lemma~\ref{lem:goodblocks}, $\perp(\topspaceDSunpert j{\sigma^{(m+1)N}\omega},
\bottomspaceDSunpert j{\sigma^{(m+1)N}\omega)}>10\delta$. 
Thus, we obtain \eqref{it:ind2} for $n=m+1$.

Hence using Lemma \ref{lem:LY}\eqref{it:det},
we see that $\text{det}(\tilde B_n|_{\tilde V_n})\ge
(D\delta)^j\det(\tilde B_n|_{\bottomspacemat j{\tilde B_n}^\perp})=
(D\delta)^je^{\Xi_j(\tilde B_n)}\ge (D\delta/3)^je^{\Xi_j(B_n)}$.

Since $\Xi_j(\tilde B_{l-1}\cdots \tilde B_k)\ge
\prod_{i=k}^{l-1}\det(\tilde B_i|\tilde V_i)$, 
multiplying the inequalities and taking logarithms gives the result.

\begin{step}
  A lower bound for $\Xi_j$ for concatenations of arbitrary blocks.
\end{step}

Let $A_1,\ldots,A_n$ be an arbitrary sequence of matrices. 
We write
\begin{equation}\label{eq:badblox}
g^\epsilon(A_1,\ldots,A_n,\Delta_1,\ldots,\Delta_n)=
\Xi_j(\pertA_n\ldots \pertA_1)-\Xi_j(A_n\ldots A_1)
\end{equation}
and prove that $(g^\epsilon)^-$ is integrable in
$\Delta_1,\ldots,\Delta_n$ and that
\begin{equation}\label{eq:intbadblox}
\int
g^\epsilon(A_1,\ldots,A_n,\Delta_1,\ldots,\Delta_n)\,
d\lambda^n(\Delta_1,\ldots,\Delta_n)
\ge
-1.28d^2nj
\end{equation}
for all $A_1,\ldots,A_n$.

\begin{lem}\label{lem:linear}
There exists $B\approx -1.28$ such that for all $z\in\C$, and all $l\ge 0$
\begin{equation*}
\int_0^1t^l\log |1-tz|\,dt\ge B.
\end{equation*}
\end{lem}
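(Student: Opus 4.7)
The plan is to reduce to real non-negative $z$ via the reverse triangle inequality, then split into two regimes of $r := |z|$: small $r$ by a monotonicity-in-$l$ argument that reduces to the explicitly solvable case $l = 0$, and large $r$ by a direct piecewise estimate. For the reduction, $|1 - tz| \ge |1 - t|z||$ for any $z \in \C$ and $t \in [0, 1]$ by the reverse triangle inequality, so after taking logarithms and integrating it suffices to bound $I(r, l) := \int_0^1 t^l \log|1 - tr|\, dt$ from below for $r \ge 0$ and $l \ge 0$.

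For $r \in [0, 2]$, I would argue monotonicity in $l$ directly: since $tr \le 2$ on $[0, 1]$ forces $|1 - tr| \le 1$ and hence $\log|1 - tr| \le 0$ there, and since $t^{l_1} \le t^{l_0}$ on $[0, 1]$ whenever $l_1 \ge l_0 \ge 0$, multiplying these inequalities (and reversing due to the non-positivity of the log) gives $t^{l_1}\log|1 - tr| \ge t^{l_0}\log|1 - tr|$ pointwise, hence $I(r, l) \ge I(r, 0)$. A direct calculation via $u = tr$ yields the closed form
\[I(r, 0) = -1 + \frac{r - 1}{r}\log|1 - r|,\]
and a standard critical-point analysis shows the minimum over $r > 0$ is attained at the unique $r^* \in (1, 2)$ satisfying $r^* - 1 = e^{-r^*}$, with value $-r^* \approx -1.278$. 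This identifies the candidate constant $B := -r^*$.

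For $r > 2$, monotonicity fails since $\log|1 - tr|$ changes sign on $[0, 1]$, so I would estimate directly. Substituting $u = tr$ gives $I(r, l) = r^{-l - 1}\int_0^r u^l \log|1 - u|\, du$, which I would split as $\int_0^1 + \int_1^2 + \int_2^r$ and bound piecewise: $\int_0^1 u^l \log(1 - u)\, du \ge \int_0^1 \log(1-u)\, du = -1$ (since $u^l \le 1$ and $\log(1-u) \le 0$), $\int_1^2 u^l \log(u - 1)\, du \ge 2^l \int_0^1 \log v\, dv = -2^l$, and $\int_2^r u^l \log(u - 1)\, du \ge 0$. Summing and using $r \ge 2$ gives $I(r, l) \ge -r^{-l - 1}(1 + 2^l) \ge -\tfrac{1}{2} - 2^{-l-1} \ge -1 > B$.

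The main subtlety will be ensuring the two regimes are consistent with the claimed sharp constant: the infimum $-r^*$ is actually attained at $l = 0$ and $r = r^* \approx 1.278$, which lies just inside the window where the singularity of $\log|1 - tr|$ falls in $(0, 1)$. The monotonicity step is what pins the worst case down to $l = 0$, and the large-$r$ bound must be strictly better than $B$, which it comfortably is since $-1 > -r^*$.
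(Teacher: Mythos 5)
Your proposal is correct, and it takes a noticeably more explicit route than the paper. The paper reduces to real non-negative argument (via $|1-tz|\ge|1-t\,\mathrm{Re}(z)|\ge|1-t|\mathrm{Re}(z)||$; your version via the reverse triangle inequality directly to $|1-t|z||$ is a clean alternative), then drops to $l=0$ by $t^l\le 1$ with $\log^-\le 0$, and closes with a soft continuity-plus-compactness argument for $g(z)=\tfrac1z\int_0^z\log^-|1-y|\,dy$ on $[0,2]$ — deliberately \emph{not} computing the constant, since (quoting the paper) ``its precise value is irrelevant for our purposes.'' You instead evaluate $I(r,0)=-1+\tfrac{r-1}{r}\log|1-r|$ in closed form, locate the minimiser $r^*\in(1,2)$ solving $r^*-1=e^{-r^*}$ to obtain the sharp bound $B=-r^*\approx-1.278$, and separately handle $r>2$ (where the monotonicity-in-$l$ argument breaks because $\log|1-tr|$ changes sign) by the substitution $u=tr$ and a three-piece estimate yielding $I(r,l)\ge-\tfrac12-2^{-l-1}\ge-1>B$. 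The overall reduction-then-monotonicity skeleton is the same as the paper's; what you add is identification of the optimal constant, at the cost of a longer, more computational argument. Both are valid; yours gives strictly more information than the paper needs, and the case split at $r=2$ correctly guards against the failure of the pointwise sign argument beyond that threshold.
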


\begin{proof}
Let us show there exists a lower bound; its precise value is irrelevant for our purposes.
Since for every $z\in \C, t\in [0,1]$, we have that $|1-tz|\geq |1-t Re(z)|\geq \big|1-t |Re(z)| \big|$, it suffices to show the lemma holds for $z\in \R^+\cup \{0\}$.
For $z=0$ the integral is 0.
Let us assume $z\in \R^+$, and let $\log^- x:=\min(0, \log x)$. Then,
\begin{align*}
 \int_0^1t^l\log |1-tz|\,dt &\geq \int_0^1t^l\log^- |1-tz|\,dt \geq \int_0^1 \log^- |1-tz|\,dt\\
& = \frac{1}{z}\int_0^z \log^- |1-y|\,dy
\geq \inf_{z\in(0,2]} \frac{1}{z}\int_0^z \log^- |1-y|\,dy.
\end{align*}
The function $g(z) :=\frac{1}{z}\int_0^z \log^- |1-y|\,dy$ for $z\neq 0$ and $g(0):=0$ is continuous on $\R^+$, and hence bounded on $[0,2]$. The statement follows.
\end{proof}

\begin{lem}\label{lem:poly}
  Let $B$ be as in Lemma \ref{lem:linear} and $p$ be a polynomial. Then, for all $l\ge 0$,
  \begin{equation*}
    \int_0^1t^l(\log|p(t)|-\log|p(0)|)\,dt\ge B\cdot\deg(p).
  \end{equation*}
\end{lem}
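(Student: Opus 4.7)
The plan is to factor $p$ over $\mathbb{C}$ and reduce the estimate to a sum of integrals of the form handled by Lemma \ref{lem:linear}. Write $d=\deg(p)$ and $p(t)=c\prod_{i=1}^d(t-z_i)$ with $c\in\C\setminus\{0\}$ and $z_i\in\C$ the (repeated) roots. The first thing to check is the edge case $p(0)=0$: then $\log|p(0)|=-\infty$ while $\int_0^1 t^l\log|p(t)|\,dt$ is finite (the integral of $\log t$ against a bounded weight is finite, and the contribution of the remaining factors is integrable as well since $\log|t-z|$ is locally integrable near any $z\in\C$), so the inequality holds trivially. So assume $p(0)\neq 0$, which forces every $z_i\neq 0$.

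In that case, each linear factor can be rewritten as $t-z_i=-z_i(1-t/z_i)$, so that
\[
\frac{p(t)}{p(0)}=\prod_{i=1}^{d}\frac{t-z_i}{-z_i}=\prod_{i=1}^d\bigl(1-t/z_i\bigr).
\]
Taking logarithms of absolute values and using linearity of the integral gives
\[
\int_0^1 t^l\bigl(\log|p(t)|-\log|p(0)|\bigr)\,dt
=\sum_{i=1}^d \int_0^1 t^l \log|1-t/z_i|\,dt.
\]

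Now apply Lemma \ref{lem:linear} to each summand with the choice $z=1/z_i\in\C$: each integral is bounded below by $B$, so the sum is at least $B\cdot d=B\cdot\deg(p)$, which is exactly the required estimate.

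There is really no main obstacle here: once one sees the factorization trick, Lemma \ref{lem:linear} does all the work, and the only mildly delicate point is the bookkeeping when $p$ vanishes at $0$, which the above paragraph handles by disposing of the trivial case before factoring. (Note that repeated roots cause no issue because each factor is treated separately, and roots that happen to lie in $[0,1]$ also cause no issue since $\log|1-tz|$ is integrable on $[0,1]$ for every $z\in\C$, as used already in the proof of Lemma \ref{lem:linear}.)
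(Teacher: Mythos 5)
Your proof is correct and follows essentially the same route as the paper: dispose of the trivial case $p(0)=0$, factor $p(t)/p(0)$ as a product of $\deg(p)$ terms of the form $1-tz$ over the reciprocals of the roots, and apply Lemma~\ref{lem:linear} term by term. The only difference is that you spell out why the $p(0)=0$ case is harmless (integrability of $\log|p|$ near roots), which the paper simply calls ``clear.''
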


\begin{proof}
  If $p(0)=0$, then the result is clear. Otherwise, we consider the
  polynomial $f(t)=p(t)/p(0)$ and demonstrate that $\int_0^1
t^l \log |f(t)|\,dt\ge B\cdot\text{deg}(f)$.

  To see this, notice that $f(t)$ may be expressed as
  $\prod_{i=1}^{\text{deg}(f)}(1-tz_i)$, where $\{z_i^{-1}\}_{i=1}^{\text{deg}(f)}$ is the set of roots of $f$, and hence $p$, with multiplicity. Applying Lemma \ref{lem:linear}
  then gives the result.
\end{proof}

\begin{lem}\label{lem:operator}
  Let $B$ be the constant from the statement of
  Lemma \ref{lem:linear}. 
  Let $P(t)$ be a degree $j$ matrix-valued polynomial. That is, $P(t)$ may be
  expressed as $\sum_{k=0}^jA_kt^k$ for some collection of $d\times d$
  matrices $A_k$. Then, for all $l\ge 0$,
  \begin{equation*}
    \int_0^1t^l(\log \|P(t)\|-\log\|P(0)\|)\,dt\ge B\cdot\deg(P).
  \end{equation*}
\end{lem}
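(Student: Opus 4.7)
The plan is to reduce the matrix-valued statement to the scalar Lemma \ref{lem:poly} by selecting a single unit vector $v$ that realizes $\|P(0)\|$ and passing to the scalar polynomial $q(t) := \|P(t)v\|^2$. If $P(0) = 0$ then $\log \|P(0)\| = -\infty$ and the inequality is vacuously true, so I may assume $P(0) \neq 0$ and pick $v \in \R^d$ with $\|v\| = 1$ and $\|P(0)v\| = \|P(0)\|$; such a $v$ exists by compactness of the unit sphere.

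The key observation is that $q$ is a non-negative real polynomial in $t$ of degree at most $2\deg(P)$: each coordinate of the vector-valued polynomial $P(t)v = \sum_{k=0}^{j}(A_k v)\,t^k$ has degree at most $\deg(P)$, and $q(t) = \sum_i (P(t)v)_i^2$ is a sum of their squares. Moreover $q(0) = \|P(0)v\|^2 = \|P(0)\|^2 > 0$, and since $\|v\| = 1$ we have $\|P(t)\| \geq \|P(t)v\| = \sqrt{q(t)}$ for every $t$, with equality at $t = 0$ by the choice of $v$. Subtracting the equality at $t = 0$ from the pointwise inequality yields
\begin{equation*}
2\bigl(\log \|P(t)\| - \log \|P(0)\|\bigr) \geq \log q(t) - \log q(0).
\end{equation*}
Integrating against $t^l$ and invoking Lemma \ref{lem:poly} on $q$ then gives
\begin{equation*}
2 \int_0^1 t^l \bigl(\log \|P(t)\| - \log \|P(0)\|\bigr)\,dt \geq B \cdot \deg(q).
\end{equation*}
Since $B < 0$ and $\deg(q) \leq 2\deg(P)$, we have $B\deg(q) \geq 2B\deg(P)$, and dividing by $2$ produces the claimed bound.

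The only real obstacle is precisely the one that the choice of $v$ is designed to overcome: the operator norm $\|P(t)\|$ is not itself a polynomial in $t$, so Lemma \ref{lem:poly} cannot be applied to it directly. Picking $v$ so that $\|P(0)v\| = \|P(0)\|$ is what forces the lower bound $\|P(t)\| \geq \|P(t)v\|$ to be tight at $t = 0$; without this, subtracting $\log \|P(0)\|$ would introduce a constant slack that the lemma cannot absorb. Once the reduction to $q$ is made, everything else is bookkeeping with the scalar lemma and the inequality $\deg(q) \leq 2\deg(P)$.
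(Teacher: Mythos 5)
Your proof is correct, but it takes a slightly different route from the paper's. The paper picks a \emph{pair} of unit vectors $\mathbf e,\mathbf f$ with $P(0)\mathbf e=\|P(0)\|\mathbf f$ and passes to the scalar polynomial $p(t)=\langle P(t)\mathbf e,\mathbf f\rangle$, which has $p(0)=\|P(0)\|$, $|p(t)|\le\|P(t)\|$, and degree at most $\deg P$, so Lemma~\ref{lem:poly} applies immediately with no bookkeeping. You instead take a single maximizing vector $v$ and pass to $q(t)=\|P(t)v\|^2$, a real polynomial of degree at most $2\deg P$; the squaring doubles the degree, but also doubles the logarithm, and the two factors of $2$ cancel exactly, giving the same final bound. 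Both reductions are valid; the paper's bilinear trick is a little cleaner because it avoids the factor-of-two accounting, while yours has the minor advantage of only requiring a single singular vector of $P(0)$ rather than a singular pair. One thing worth noting explicitly in your version (you implicitly use it): the inequality $B\deg(q)\ge 2B\deg(P)$ relies on $B<0$, and it holds even when $\deg q<2\deg P$ (e.g.\ if $A_j v=0$), so no degree-matching assumption is needed.
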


\begin{proof}
  If $P(0)$ is the zero matrix, the result is trivial. Otherwise,
  there exist unit vectors $\mathbf e$ and $\mathbf f$ such that
  $P(0)\mathbf e=\|P(0)\|\mathbf f$.

  If we set $p(t)=\langle P(t)\mathbf e,\mathbf f\rangle$, then we
  have $p(0)=\|P(0)\|$ and $\|P(t)\|\ge |p(t)|$, so the result follows from Lemma
  \ref{lem:poly}.
\end{proof}

\begin{lem}\label{lem:magicunifbd}
  Let $B\approx -1.28$ be the constant from the statement of
  Lemma \ref{lem:linear}. 
  Let $L$, $M$, $A$ and $R$ be arbitrary $d\times d$ matrices. Then
  \begin{equation*}
    \int_0^1t^l\left(\log \|\Lambda^j
    (L(A+tM)R)\|-\log\|\Lambda^j (LAR)\|\right)\,dt\ge jB.
  \end{equation*}
\end{lem}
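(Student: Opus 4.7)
The plan is to recognize the integrand as fitting directly into the framework of Lemma \ref{lem:operator} once we exhibit the polynomial structure in $t$.

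First I would use multiplicativity of exterior powers: $\Lambda^j(L(A+tM)R) = \Lambda^j(L)\,\Lambda^j(A+tM)\,\Lambda^j(R)$. Expanding $\Lambda^j(A+tM)$ on a simple wedge,
\begin{equation*}
\Lambda^j(A+tM)(v_1\wedge\cdots\wedge v_j) = (A+tM)v_1\wedge\cdots\wedge(A+tM)v_j,
\end{equation*}
and multilinearity shows this is a polynomial in $t$ of degree at most $j$, whose coefficients are linear maps on $\Lambda^j\R^d$. Multiplying on both sides by the fixed matrices $\Lambda^j(L)$ and $\Lambda^j(R)$ preserves the polynomial character and the degree bound.

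Thus, setting $P(t) := \Lambda^j(L(A+tM)R)$, we have a matrix-valued polynomial (with values in linear operators on the finite-dimensional space $\Lambda^j\R^d$) of degree at most $j$, with $P(0) = \Lambda^j(LAR)$. The proof of Lemma \ref{lem:operator} only uses the existence of unit vectors $\mathbf e,\mathbf f$ attaining the norm of $P(0)$ in its domain and codomain, which works for matrices of any size; hence Lemma \ref{lem:operator} applies to this $P$ and gives
\begin{equation*}
\int_0^1 t^l\bigl(\log\|P(t)\|-\log\|P(0)\|\bigr)\,dt \ge B\cdot\deg(P).
\end{equation*}
Since $B<0$ by Lemma \ref{lem:linear} and $\deg(P)\le j$, we conclude $B\deg(P)\ge Bj=jB$, which is the desired inequality.

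There is essentially no serious obstacle here; the content of the lemma is entirely packaged in recognizing the polynomial structure. The only point requiring a brief comment is that Lemma \ref{lem:operator} was stated for $d\times d$ matrices while we apply it in dimension $\binom{d}{j}$, but the argument is unchanged. The negativity of $B$ is what allows us to replace $\deg(P)\le j$ by the clean lower bound $jB$ on the right-hand side.
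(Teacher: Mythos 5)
Your proposal is correct and takes essentially the same approach as the paper's proof: recognize $\Lambda^j(L(A+tM)R)$ as a matrix-valued polynomial in $t$ of degree at most $j$ on $\Lambda^j\R^d$ and apply Lemma \ref{lem:operator}. The paper states this more tersely; you usefully spell out the multilinearity argument giving the degree bound $\le j$, the observation that Lemma \ref{lem:operator} works in dimension $\binom{d}{j}$, and the point that $B<0$ lets you pass from $B\cdot\deg(P)$ to $jB$, all of which the paper leaves implicit.
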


\begin{proof}
  Notice that $\Lambda^j(L(A+tM)R)$ is a polynomial family of
  operators on $\Lambda^j\mathbb R^d$. Taking the standard orthogonal
  basis of $\Lambda^j\mathbb R^d$, let $P(t)$ be the matrix of
  $\Lambda^j (L(A+tM)R)$. The result then follows by applying Lemma
  \ref{lem:operator}.
\end{proof}

We obtain \eqref{eq:intbadblox} by a telescoping argument:
\begin{align*}
  &\Xi_j(A_n\ldots A_1)-\Xi_j(A_n^\epsilon\ldots A_1^\epsilon)\\
  &=
  \sum_{k=1}^n\left(\Xi_j(A_n^\epsilon\ldots
  A_{k+1}^\epsilon A_k\ldots A_1)-\Xi_j(A_n^\epsilon\ldots A_k^\epsilon
  A_{k-1}\ldots A_1)\right)
\end{align*}
Recall that $\Xi_j(A)=\log \|\Lambda^jA\|$. We estimate the integral of
the $k$th term in the sum. Let $L=A_n^\epsilon\ldots A_{k+1}^\epsilon$
and $R=A_{k-1}\ldots A_1$. Regarding $\Delta_{k+1},\ldots,\Delta_n$ as
fixed, we need to estimate:

\begin{equation*}
  \int_{\mathbf B}\left(\log\|\Lambda^j(LA_kR)\|-\log\|\Lambda^j(L(A_k+\epsilon
  \Delta)R)\|\right)\,d\lambda(\Delta),
\end{equation*}
where $\mathbf B=\{M\colon \|M\|\le 1\}$.
We then disintegrate the measure $\lambda$ radially, so that
$d\lambda=d^2t^{d^2-1}\,dt\cdot d(\partial\lambda)(H)$
where $\Delta=tH$, $H$ takes values in $\partial\mathbf B$ and
$\partial\lambda$ is the boundary measure. For a fixed $H$, the quantity
to estimate is
\begin{equation*}
  \int_0^1
  \left(\log\|\Lambda^j(LA_kR)\|-\log\|\Lambda^j(L(A_k+\epsilon
  tH)R)\|\right)t^{d^2-1}\,dt.
\end{equation*}

Since this quantity is uniformly bounded above, by
Lemma \ref{lem:magicunifbd}, we obtain \eqref{eq:intbadblox}.

\begin{step}Gluing blocks.
\end{step}

\begin{lem}\label{lem:trans}
Let $L$, $R$ and $A$ be given matrices. 
Then
$\Xi_j(L(A+\epsilon\Delta)R)-(\Xi_j(L)+\Xi_j(R))$ is
has integrable negative part as a
function of $\Delta$ and has integral bounded below by $K\log\epsilon$, 
where $K$ is independent of $L$, $A$ and $R$.
\end{lem}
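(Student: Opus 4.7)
My plan is to pass to the exterior algebra, use the SVD to extract the top singular directions of $\Lambda^j L$ and $\Lambda^j R$, and reduce the lemma to a uniform-in-$A$ lower bound on an integrated log-determinant. The top right singular vector of $\Lambda^j L$ and the top left singular vector of $\Lambda^j R$ are simple $j$-vectors $\hat e$, $\hat f$ (being wedges of the top $j$ right singular vectors of $L$ and top $j$ left singular vectors of $R$, respectively). Combining the elementary bound
\begin{equation*}
\|\Lambda^j(L(A+\epsilon\Delta)R)\| \geq \|\Lambda^j L\|\,\|\Lambda^j R\|\,\bigl|\langle \hat e, \Lambda^j(A+\epsilon\Delta)\,\hat f\rangle\bigr|
\end{equation*}
with the bilateral orthogonal invariance $\Delta\mapsto O_1\Delta O_2^T$ of $\lambda$ (which I use to rotate both $\hat e$ and $\hat f$ to $e_1\wedge\cdots\wedge e_j$, absorbing the rotations into $A$) reduces the lemma to proving
\begin{equation*}
\int_U\log\bigl|\det(A_j+\epsilon\Delta_j)\bigr|\,d\lambda(\Delta) \geq K\log\epsilon + C
\end{equation*}
uniformly in $A_j\in M_{j\times j}(\mathbb R)$, where $M_j$ denotes the top-left $j\times j$ block of $M$.

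To prove this, I decompose $\Delta=tH$ radially, with $t\in[0,1]$ and $H\in\partial U$. For fixed $H$, $P_H(t):=\det(A_j+\epsilon tH_j)$ is a polynomial in $t$ of degree at most $j$. Let $m_0=m_0(A,H)$ be its order of vanishing at $t=0$ and $\alpha_{m_0}(A,H)$ its lowest non-zero coefficient (in the variable $\epsilon t$). Factoring
\begin{equation*}
P_H(t) = \epsilon^{m_0}\,\alpha_{m_0}(A,H)\,t^{m_0}\prod_{i=1}^{j-m_0}\bigl(1-tz_i(A,H)\bigr),
\end{equation*}
and applying Lemma \ref{lem:linear} to each linear factor gives a pointwise lower bound on $\int_0^1 t^{d^2-1}\log|P_H(t)|\,dt$. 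Integrating this over $H\in\partial U$ should produce the desired $K\log\epsilon$ bound, up to a controlled additive constant.

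The main obstacle is that the naive use of Lemma \ref{lem:linear} (constant lower bound $B$) leaves a residual $\int_{\partial U}\log|\alpha_{m_0(A,H)}(A,H)|\,d(\partial\lambda)(H)$ which, after diagonalising $A_j$ by SVD, works out to $\sum_{i=1}^{\mathrm{rank}(A_j)}\log\sigma_i(A_j)$ plus a universal constant, and this is not uniformly lower bounded since the $\sigma_i$ can be arbitrarily small. The fix, where I expect the bulk of the technical work to lie, is a sharper version of Lemma \ref{lem:linear}: for all $z\in\mathbb C$,
\begin{equation*}
\int_0^1 t^l\log|1-tz|\,dt \geq B+\max\Bigl(0,\tfrac{\log|z|}{l+1}-C'\Bigr),
\end{equation*}
provable by the substitution $y=tz$ and separate estimates on $\int_0^1 y^l\log|1-y|\,dy$ and $\int_1^{|z|}y^l\log|y-1|\,dy$. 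When this is substituted for $B$, a small singular value $\sigma_i\ll\epsilon$ of $A_j$ produces a large root $z_i\sim\epsilon/\sigma_i$, and the excess contribution $\log|z_i|/(l+1)\sim(\log\epsilon-\log\sigma_i)/d^2$ precisely cancels the $\log\sigma_i/d^2$ loss in $\log|\alpha_{m_0}|$. The cancellation leaves $\log\epsilon/d^2$ per singular value, summing to a clean $j\log\epsilon$ uniformly in $A_j$. Integrability of the negative part is a byproduct, since $\det(A_j+\epsilon\Delta_j)$ is a non-trivial polynomial in $\Delta$.
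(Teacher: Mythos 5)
Your reduction to the core estimate
\begin{equation*}
\int_{\mathbf U}\log\bigl|\det(A_j+\epsilon\Delta_j)\bigr|\,d\lambda(\Delta)\ge K\log\epsilon + C
\end{equation*}
(uniform in the $j\times j$ block $A_j$) is essentially the same as the paper's, though you phrase it via the exterior algebra and a Cauchy--Schwarz pairing while the paper diagonalises $L,R$ by SVD and multiplies by rank-$j$ projections; both land in the same place. The divergence, and the gap, is in how you then attack this integral.

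The paper handles the $j\times j$ determinant estimate by a short probabilistic argument: generate the columns of the perturbation one at a time and observe that the probability the $i$th column of $A_j+\epsilon U$ lands within a $\delta$-neighbourhood of the span of the earlier columns is $O(\delta/\epsilon)$, whence $\mathbb P(|\det|<\delta^j)=O(j\delta/\epsilon)$ and the expectation of $-\log|\det|$ is $O(j|\log\epsilon|)$. Nothing about roots of polynomials is needed. Your route instead expands radially and applies a (proposed, unproved) sharpening of Lemma~\ref{lem:linear}. You correctly notice that the naive version of Lemma~\ref{lem:linear} leaves an uncontrolled residual $\log|\alpha_{m_0}|\sim\sum\log\sigma_i(A_j)$, and you propose to cancel it against the $\log|z_i|/(l+1)$ excess from large roots. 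But this cancellation does not in fact work. In the radial decomposition the weight is $d^2 t^{d^2-1}\,dt$, which integrates to $1$, so the constant term $\log|\alpha_{m_0}|$ enters the inner integral with coefficient $1$, while the excess you extract from a large root $z_i\sim\epsilon/\sigma_i$ is only $\log|z_i|/(l+1)=\log|z_i|/d^2$. Writing this out (take $m_0=0$ and $A_j=\Sigma$ diagonal for clarity), the inner integral is bounded below by roughly
\begin{equation*}
\Bigl(1-\tfrac{1}{d^2}\Bigr)\log|\det A_j|+\frac{j\log\epsilon+\log|\det H_j|}{d^2}+O(1),
\end{equation*}
and the first term still diverges to $-\infty$ as $\det A_j\to 0$: the $\log\sigma_i$ loss is only cancelled to the extent of a $1/d^2$ fraction, not precisely as you claim (your assertion that the loss in $\log|\alpha_{m_0}|$ is ``$\log\sigma_i/d^2$'' appears to be where the arithmetic slipped). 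So the proposed sharpening of Lemma~\ref{lem:linear}, even if proved, does not close the argument, and the uniformity in $A$ that the lemma demands is not obtained. You would need a genuinely different mechanism to control the near-singular $A_j$ case; the paper's probability-that-the-determinant-is-small argument is one such mechanism and is considerably shorter than what the polynomial approach would require.
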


\begin{proof}
  Write $L=O_1D_1O_2$ where $D_1$ is diagonal with entries arranged in
  decreasing order and $O_1$ and $O_2$ are orthogonal. Similarly write
  $R=O_3D_2O_4$. Let $A'=O_2AO_3$ and $\Delta'=O_2\Delta O_3$. Then we
  have
  \begin{align*}
    \Xi_j(L(A+\epsilon\Delta)R)&=\Xi_j(D_1(A'+\epsilon\Delta')D_2);\\
    \Xi_j(L)&=\Xi_j(D_1);\text{ and }\\
    \Xi_j(R)&=\Xi_j(D_2).
  \end{align*}
  Using the inequality $\Xi_j(AB)\le \Xi_j(A)+\Xi_j(B)$ and setting 
  $C$ to be the diagonal matrix with 1's in the first $j$ elements of
  the diagonal and 0's elsewhere, we have 
  \begin{align*}
    \Xi_j(D_1(A'+\epsilon\Delta')D_2)&\ge
    \Xi_j(CD_1(A'+\epsilon\Delta')D_2C)\\
    &=\Xi_j(D_1C(A'+\epsilon\Delta')CD_2)\\
    &=\Xi_j(D_1C)+\Xi_j(C(A'+\epsilon\Delta')C)+\Xi_j(CD_2)\\
    &=\Xi_j(L)+\Xi_j(R)+\Xi_j(C(A'+\epsilon\Delta')C).
  \end{align*}

The equality between the second and third lines arises because the matrices $D_1C$,
$C(A'+\epsilon\Delta')C$ and $CD_2$ and their product have non-zero entries
only in the top left $j\times j$ submatrix. For such matrices, $\Xi_j(\cdot)$
is numerically equal to the logarithm of the 
absolute value of the determinant of the submatrix.
Since the determinant is multiplicative, the equality follows.

  Since Lebesgue measure on $\mathbf B=\{M\colon \|M\|\le 1\}$ is preserved by
  the operations of pre- and post-multiplying by an orthogonal matrix,
  it suffices to show that there exists $K>0$ such that 
  for any matrix $A$,
  \begin{equation}\label{eq:epsest1}
    \int_{\mathbf B}\Xi_j(C(A'+\epsilon\Delta)C)\,d\lambda(\Delta)\ge 
    K\log\epsilon\quad
    \text{for $\epsilon<\tfrac12$}.
  \end{equation}

  Let $A''$ be the top left $j\times j$ submatrix of $A'$ and notice
  that the measure on the top left $j\times j$ submatrix of $\Delta$
  is absolutely continuous with respect to the measure on $j\times j$
  matrices with uniform entries in $[-1,1]$ with bounded
  density. As noted above, $\Xi_j$ agrees with the logarithm of the
  absolute value of the determinant for a $j\times j$ matrix.

  Hence to establish \eqref{eq:epsest1}, it suffices to give a
  logarithmic lower bound:
  \begin{equation}\label{eq:epsest2}
    \int_{\mathbf U}\log\det(A''+\epsilon U)\,d\lambda'(U)\ge
    K\log\epsilon\quad\text{for $\epsilon<\tfrac12$},
  \end{equation}
  where $\mathbf U$ is the collection of $j\times j$ matrices with
  entries in $[-1,1]$ and $\lambda'$ is the uniform measure on $\mathbf
  U$. One checks, thinking of the columns of $U$ being generated one
  at a time, that the probability that the $i$th column lies within a
  $\delta$-neighbourhood of the span of the previous columns is at
  most $O(\delta/\epsilon)$, so the probability that the determinant of 
  $A''+\epsilon U$
  is less than $\delta^j$ is $O(j\delta/\epsilon)$. Hence we obtain
  \begin{equation*}
    \PP(-\log\det(A''+\epsilon U)>k)\le \min(1,Cje^{-k/j}/\epsilon).
  \end{equation*}
  Using the estimate for non-negative random variables $\mathbb EX\le
  \sum_{k=0}^\infty \PP(X\ge k)$, we obtain the bound $\mathbb
  E(-\log\det(A''+\epsilon U))\lesssim j|\log\epsilon|$.

  From this, we obtain the $O(|\log\epsilon|)$ bound as required.
\end{proof}

\begin{step}Putting it all together.
\end{step}

We apply this by grouping each consecutive string of good blocks into a
single matrix (and using \eqref{eq:goodblox}) and also grouping
strings of consecutive bad blocks minus the first and last matrices
into a single matrix (and using \eqref{eq:intbadblox}). The first and last
matrices of a string of bad blocks are then handled with Lemma
\ref{lem:trans}.

More specifically, we condition on $\omega\in\Omega$ and calculate
$\int\Xi_j(\cocycleps{MN}\omega)\,d\lambda^{MN}$.
Let $S=\{0\le i<M\colon \cocycle N{\sigma^{iN}\omega}\text{ is bad}\}$.
Let $r(\om)=|S|$ and $0<b_1<b_2<\ldots<b_r<M$ be the
increasing enumeration of $S$. Also let $b_0=-1$ and $b_{r+1}=M$.
Then we factorize $\cocycleps{MN}\om$ and $\cocycle {MN}\om$ as 
\begin{align*}
\cocycleps{MN}\omega&=\tilde G_r\tilde B_r\cdots 
\tilde B_2\tilde G_1\tilde B_1\tilde G_0\text{\,; and}\\
\cocycle{MN}\omega&= G_r B_r\cdots B_2 G_1 B_1 G_0,
\end{align*}
where $G_i=\cocycle{(b_{i+1}-b_i-1)N}{\sigma^{(b_i+1)N}\om}$,
$\tilde G_i=\cocycleps{(b_{i+1}-b_i-1)N}{\sigma^{(b_i+1)N}\om}$,
$B_i=\cocycle N{\sigma^{b_iN}\om}$ and
$\tilde B_i=\cocycleps N{\sigma^{b_iN}\om}$ (so the $G_i$ are 
products of consecutive good blocks and $B_i$ are (single) bad blocks).
We further factorize $B_i$ and $\tilde B_i$ as
$\tilde B_i=A^\ep_{\sigma^{(b_i+1)N-1}\om}\tilde C_iA^\ep_{\sigma^{b_iN}\om}$ and
$B_i=A_{\sigma^{(b_i+1)N-1}\om}C_iA_{\sigma^{b_iN}\om}$, where
$\tilde C_i=\cocycleps{N-2}{\sigma^{b_iN+1}\om}$ and
$C_i=\cocycle{N-2}{\sigma^{b_iN+1}\om}$.

Now using Lemma \ref{lem:trans} (and the constant $K$ from its statement), we have
\begin{equation*}
\int \Xi_j(\cocycleps{MN}\om)\,d\lambda^{MN}\ge
\int \left(\sum_{i=0}^{r(\om)}\Xi_j(\tilde G_i)+\sum_{i=1}^{r(\om)}\Xi_j(\tilde C_i)
\right)\,d\lambda^{MN}+r(\om)K|\log\ep|.
\end{equation*}

From \eqref{eq:goodblox}, we have $\sum_{i=0}^r \Xi_j(\tilde G_i)
\ge \sum_{i=0}^r \Xi_j(G_i)+Mj\log(D\delta/3)$ for all values of the
perturbation matrices that occur inside those blocks.
From \eqref{eq:intbadblox}, we have for each $1\le i\le r(\om)$, 
\begin{equation*}
\int\Xi_j(\tilde C_i)\,d\lambda^{N-2}(\Delta_{b_iN+1},\ldots,
\Delta_{(b_{i}+1)N-2})
\ge \Xi_j(C_i)-1.28d^2(N-2)j.
\end{equation*}
Letting $E(\om)=Mj\log(D\delta/3)-1.28d^2(N-2)jr(\om)+r(\om)K\log\ep$ and
combining the inequalities together with subadditivity of $\Xi_j$ yields
\begin{equation}\label{eq:cocbd}
\begin{split}
&\frac{1}{MN}\int\Xi_j(\cocycleps {MN}\om)\,d\Lambda^{MN}
\ge\frac{1}{MN}\sum_{i=0}^{r(\om)} \Xi_j(G_i)+\frac{1}{MN}
\sum_{i=1}^{r(\om)}\Xi_j(C_i)+\frac{E(\om)}{MN}\\
&\ge \frac{\Xi_j(\cocycle {MN}\om)}{MN}-\frac{1}{MN}\sum_{i=1}^{r(\om)} 
\left(\Xi_j^+(A_{\sigma^{b_iN}\om})+
\Xi_j^+(A_{\sigma^{(b_i+1)N-1}\om})\right)+\frac{E(\om)}{MN}.
\end{split}
\end{equation}

By \eqref{eq:etachoice} and \eqref{eq:Ncond}, we see 
$(1/MN)\int E(\om)\,d\bbp(\om)>-3\chi/4$.
Finally, we have
\begin{equation*}
\frac{1}{MN}\int\left(\sum_{i=1}^{r(\om)} (\Xi_j^+(A_{\sigma^{b_iN}\om})+
\Xi_j^+(A_{\sigma^{(b_i+1)N-1}\om}))\right)
\le \frac{2}{N}\int\Xi_j^+(A_\om)\,d\bbp(\om)<\frac\chi4.
\end{equation*}

Combining these inequalities, we obtain
\begin{equation*}
\frac1{MN}\int\Xi_j(\cocycleps{MN}\om)\,d\bar\bbp(\bar\om)
\ge \frac{1}{MN}\int\Xi_j(\cocycle{MN}\om)\,d\bbp(\om)-\chi.
\end{equation*}

Taking the limit as $M\to\infty$, we deduce
$\liminf_{\ep\to 0}(\mu_1^\ep+\ldots+\mu_j^\ep)
\ge (\mu_1+\ldots+\mu_j)-\chi$. 
Since $\chi>0$ was arbitrary, we deduce \eqref{eq:liminf}.

\end{proof}

\section{Convergence of Oseledets spaces}\label{sec:Spaces}

From Theorem \ref{thm:main}\eqref{part:expts}, we have established the existence
of an $\epsilon_0>0$ such that for $\epsilon<\epsilon_0$, in the perturbed
matrix cocycle,
$\mu^\ep_j\in(\lambda_i-\tau,\lambda_i+\tau)$ for all $j$ 
satisfying $D_{i-1}<j\le D_i$.
Recall that $Y_i^\ep(\oom)$ was defined to be the sum of the Oseledets spaces 
corresponding to exponents in the range $(\lambda_i-\tau,\lambda_i+\tau)$,
with $Y_i(\om)$ being the corresponding spaces for the unperturbed matrix cocycle.
Let $\topspaceDSunpert i\om=\bigoplus_{k\le i}Y_i(\om)$ be the fast subspace
for the unperturbed matrix cocycle and 
$\bottomspaceDSunpert i\om=\bigoplus_{k>i} Y_i(\om)$ be the slow subspace. 
We similarly introduce notation $\topspaceDSpert i\oom$ and 
$\bottomspaceDSpert i\oom$
in the perturbed matrix cocycle. Notice that $\oom=(\om,\Delta)$, and so
$\topspaceDSunpert i\om$ and $\topspaceDSpert i\oom$ may be regarded as living
on the same probability space $\bar\Omega$.

The proof of Theorem \ref{thm:main}\eqref{part:spaces} will follow 
relatively straightforwardly from the following lemma whose proof will
occupy this section.

\begin{lem}\label{lem:convSpaces}
Let $0<\chi<1$. 
Let $\topspaceDSunpert i\omega$
and $\topspaceDSpert i\oom$ be as above, 
corresponding to the largest $D_i$ Lyapunov exponents of the unperturbed and perturbed cocycles, respectively.
Then, for every $\ep$ sufficiently small,
\[
 \bar\bbp \big(\oom:\angle(\topspaceDSpert i{\oom}, 
 \topspaceDSunpert i\om)>\chi \big)<\chi.
\]
In particular,
$\topspaceDSpert i{\oom}$ converges in probability to 
$ \topspaceDSunpert i\om$ as $\ep\to 0$. 

\end{lem}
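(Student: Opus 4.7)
The plan is to interpose a common intermediate subspace and estimate the angle from each side. Set $N = N(\epsilon) = \lfloor C|\log\epsilon|\rfloor$, with $C$ from Lemma \ref{lem:Clogeps}, and let
\[
W(\bar\omega) := \topspacemat i{\cocycleps N{\sigma^{-N}\bar\omega}}
\]
denote the span of the top $D_i$ left singular vectors of the pulled-back perturbed $N$-step product. I aim to show that on a $\bar\bbp$-set of measure at least $1-\chi$ one has both $\angle(W(\bar\omega), \topspaceDSunpert i\omega) < \chi/2$ and $\angle(W(\bar\omega), \topspaceDSpert i{\bar\omega}) < \chi/2$, and then conclude by the triangle inequality.

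For the first angle, I would combine three ingredients. Lemma \ref{lem:SVs} with shift-invariance produces, on a set of $\omega$'s of measure $\geq 1-\chi/4$, the bound $\angle(\topspacemat i{\cocycle N{\sigma^{-N}\omega}}, \topspaceDSunpert i\omega) < \chi/12$, while Lemma \ref{lem:goodblocks} (applied at $\sigma^{-N}\omega$) ensures the gap/magnitude hypotheses of Lemma \ref{lem:LY} are met with $\delta = \chi/4$. Lemma \ref{lem:Clogeps} applied at $\sigma^{-N}\omega$ then carves out a further $\bar\omega$-set of measure $\geq 1-\chi/2$ on which $\|\cocycleps N{\sigma^{-N}\bar\omega} - \cocycle N{\sigma^{-N}\omega}\| \leq 1$, and Lemma \ref{lem:LY}(a) upgrades this to $\angle(W(\bar\omega), \topspacemat i{\cocycle N{\sigma^{-N}\omega}}) < \chi/12$. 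The first estimate follows by the triangle inequality.

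For the second angle, equivariance of the perturbed cocycle gives
\[
\topspaceDSpert i{\bar\omega} = \cocycleps N{\sigma^{-N}\bar\omega}\bigl(\topspaceDSpert i{\sigma^{-N}\bar\omega}\bigr).
\]
By Lemma \ref{lem:LY}(b) the perturbed $N$-step product inherits a comparable singular value gap, so via Lemma \ref{lem:LY}(c) it suffices to establish, for some $\delta>0$ and on a $\bar\bbp$-set of large measure uniformly in $\epsilon$, the transversality
\[
\perp\bigl(\topspaceDSpert i{\sigma^{-N}\bar\omega}, \bottomspacemat i{\cocycleps N{\sigma^{-N}\bar\omega}}\bigr) > \delta.
\]
Using Lemmas \ref{lem:SVs}, \ref{lem:Clogeps} and \ref{lem:LY}(a) one may replace $\bottomspacemat i{\cocycleps N{\sigma^{-N}\bar\omega}}$ by $\bottomspaceDSunpert i{\sigma^{-N}\omega}$, reducing the task to controlling $\perp(\topspaceDSpert i{\sigma^{-N}\bar\omega}, \bottomspaceDSunpert i{\sigma^{-N}\omega})$ uniformly in $\epsilon$.

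This last transversality is the main obstacle: it is essentially the conclusion of the lemma pulled back $N$ units in time, producing an apparent circularity. To break it, I would invoke the Grassmannian fractional-linear analysis advertised in the introduction. The forward action of the perturbed cocycle on $G(D_i,d)$ is a fractional linear transformation that contracts a higher-dimensional cross-ratio strongly away from the subvariety of subspaces meeting the slow direction, at a rate governed by the singular value gap, which is of order $e^{N(\lambda_i - \lambda_{i+1})}$ and hence polynomially large in $1/\epsilon$. Combining this uniform contraction with an Egorov-type argument on the Grassmannian flow should yield the required $\epsilon$-uniform transversality on a set of large $\bar\bbp$-measure. This step contains the genuinely new content of Section \ref{sec:Spaces}, and once in hand, the triangle inequality closes the argument.
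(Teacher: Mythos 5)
Your proposal correctly identifies the structure of the problem — interposing a singular-vector subspace, estimating angles via Lemmas \ref{lem:SVs}, \ref{lem:Clogeps}, \ref{lem:goodblocks}, \ref{lem:LY} — and the first half of your triangle inequality (comparing $W(\bar\omega)$ to the unperturbed fast space) goes through by essentially the routine argument you sketch. But the second half contains a genuine gap that your proposal names but does not fill.

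You observe that bounding $\angle(W(\bar\omega), \topspaceDSpert i{\bar\omega})$ requires an $\epsilon$-uniform transversality $\perp(\topspaceDSpert i{\sigma^{-N}\bar\omega}, \bottomspaceDSunpert i{\sigma^{-N}\omega}) > \delta$, and you correctly flag this as a circularity. Your proposed fix — ``the forward action contracts a cross-ratio strongly away from the slow subvariety... combined with an Egorov-type argument'' — is not an argument but a restatement of what needs to be proved. Contraction towards the fast direction only helps if you already know the starting subspace is quantitatively away from the slow directions; a subspace lying exactly on (or $\epsilon$-close to) the slow subvariety is not rescued by contraction, no matter how strong. So the contraction rate $e^{N(\lambda_i - \lambda_{i+1})}$ by itself cannot break the circularity, and no deterministic Egorov-type estimate can either, because the measure of the bad set of initial conditions is exactly the unknown.

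The paper's actual mechanism is essentially probabilistic and uses the noise in a specific coordinate: it conditions on $\omega$ and on all perturbations $(\Delta_n)_{n\ne -1}$, leaving only $\Delta_{-1}$ random. This makes the parameterizing matrix $B_0$ of the perturbed fast space (in a singular-vector chart) a random variable whose conditional law inherits the absolute continuity of $\Delta_{-1}$. The evolution over $n$ blocks is then tracked as a fractional linear transformation $B_n = Y^{(n)}(W^{(n)})^{-1} + E_n B_0 (W^{(n)} + X^{(n)} B_0)^{-1}$, and the central estimate is a \emph{probabilistic} bound $\bbp(\|\tilde\Delta^{-1}\| > T) \lesssim (\epsilon T s_j(W^{(n)}))^{-1}$, proved by comparison with a matrix of i.i.d.~standard normals (section on ``Distribution of $B_0$'' and the multivariate normal estimates). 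The exponential size of $s_j(W^{(n)})$ beats the $1/\epsilon$ factor precisely because the block length $N$ and number of blocks $n$ are chosen to satisfy \eqref{eq:nchoice} and conditions (C1)--(C6). This smoothing-by-conditioning step, together with the explicit anti-concentration bound on a matrix determinant, is the genuinely new content of Section~\ref{sec:Spaces}; without it your proposal cannot close the loop.
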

Recalling that $\bottomspaceDSunpert i\om=
\big( \topspacestarDSunpert i\om \big)^\perp$,
where $\topspacestarDSunpert i\om$ 
denotes the Oseledets space of the cocycle dual to $A$, 
Lemma~\ref{lem:convSpaces} immediately implies the following.

\begin{cor}
\label{cor:convSlowSpaces}
Let $\bottomspaceDSunpert i\omega$
and $\bottomspaceDSpert i\oom$ denote 
the slow Oseledets subspaces of the unperturbed and perturbed cocycles, 
respectively, as described above.
Then $\bottomspaceDSpert i{\oom}$ converges in probability to 
$ \bottomspaceDSunpert i\om$ as $\ep\to 0$.
\end{cor}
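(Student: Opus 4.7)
The plan is to reduce Corollary \ref{cor:convSlowSpaces} to Lemma \ref{lem:convSpaces} applied to the dual cocycle, and then pass through orthogonal complements. The dual cocycle, with base $\sigma^{-1}$ and generator $G(\om) = A(\sigma^{-1}\om)^*$, satisfies the hypotheses of Theorem \ref{thm:main}: the integrability $\int \log^+\|G\|\,d\PP = \int \log^+\|A\|\,d\PP < \infty$ is automatic since $\|A^*\|=\|A\|$, and a perturbation of $A$ by $\ep\Delta$ translates into a perturbation of $G$ by $\ep\Delta^*$. Because $\Delta \mapsto \Delta^*$ is a linear isometry of matrix space in operator norm, it preserves both the unit ball $U$ and the uniform measure $\lambda$ on $U$; the shifted sequence of adjoints is therefore again a sequence of i.i.d.\ uniform perturbations, so the perturbed dual cocycle fits precisely the noise model of the main theorem.

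Next, I would rerun the duality argument from the proof of Lemma \ref{lem:SVs} verbatim for the perturbed cocycle $A^\ep$, thereby establishing the identity $\bottomspaceDSpert i\oom = (\tilde{F}^*_i(\oom))^\perp$, where $\tilde{F}^*_i(\oom)$ denotes the perturbed fast Oseledets space of the dual cocycle of dimension $D_i$. The unperturbed analogue $\bottomspaceDSunpert i\om = (\topspacestarDSunpert i\om)^\perp$ is precisely the identity highlighted just before the statement of the corollary, and so no new work is needed there; the perturbed version requires only substituting $A^\ep$ for $A$ in the same pairing argument.

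With both duality identities in hand, Lemma \ref{lem:convSpaces} applied to the perturbed dual cocycle yields that $\tilde{F}^*_i(\oom)$ converges in probability to $\topspacestarDSunpert i\om$ as $\ep \to 0$. Since the orthogonal complement map $V \mapsto V^\perp$ is continuous on the Grassmannian (with respect to the angle $\angle$ introduced in Section \ref{sec:prelim}), convergence in probability is preserved by taking perps; together with the two identities above this gives $\bottomspaceDSpert i\oom \to \bottomspaceDSunpert i\om$ in probability, as required.

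The only mild obstacle is checking that the noise model for the dual cocycle really coincides with that of Theorem \ref{thm:main}, and this reduces to the single observation that $\lambda$ is invariant under $\Delta \mapsto \Delta^*$. Everything else is a direct invocation of Lemma \ref{lem:convSpaces} and of standard continuity on the Grassmannian, so the corollary is indeed an immediate consequence of the lemma.
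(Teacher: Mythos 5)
Your argument is correct and is exactly the paper's (one-sentence) approach: pass to the dual cocycle, apply Lemma \ref{lem:convSpaces} there, and take orthogonal complements, checking along the way that the dual noise model is still i.i.d.\ uniform since $\Delta\mapsto\Delta^*$ preserves $U$ and $\lambda$. The paper compresses all of this into a single remark about $\bottomspaceDSunpert i\om=(\topspacestarDSunpert i\om)^\perp$; you have simply filled in the routine verifications it omits.
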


\begin{proof}[Proof of Theorem \ref{thm:main}(\ref{part:spaces})
from Lemma \ref{lem:convSpaces} ]
Notice that $Y_i(\om)=\topspaceDSunpert i\om\cap
\bottomspaceDSunpert{i-1}\om$ and 
$Y_i^\ep(\oom)=\topspaceDSpert i\oom\cap
\bottomspaceDSpert{i-1}\oom$,
so we want to show that $\bottomspaceDSpert {i-1}\oom\cap 
\topspaceDSpert i\oom$ converges
in probability to $\bottomspaceDSunpert{i-1}\omega\cap 
\topspaceDSunpert i\omega$
as $\epsilon\to 0$. We also have
\begin{align*}
\bottomspaceDSpert {i-1}\oom\cap \topspaceDSpert i\oom&=
\operatorname{Pr}_{\topspaceDSpert i\oom \parallel 
\bottomspaceDSpert i\oom}
(\bottomspaceDSpert {i-1}\oom);\text{ and}\\
\bottomspaceDSunpert {i-1}\omega\cap \topspaceDSunpert i\omega&=
\operatorname{Pr}_{\topspaceDSunpert i\omega \parallel 
\bottomspaceDSunpert i\omega}
(\bottomspaceDSunpert {i-1}\omega).
\end{align*}
	
Now, lemma 6  of \cite{FLQ2}, together with Lemma \ref{lem:convSpaces} and
the separation of $\topspaceDSunpert i\omega$ and $\bottomspaceDSunpert i\omega$ 
guaranteed by Lemma~\ref{lem:goodblocks} do the job.
\end{proof}
	
\subsection{Strategy and notation}
Throughout, we shall let $j=D_i$, so that we are studying evolution of
$j$-dimensional subspaces.
In order to show Lemma~\ref{lem:convSpaces},
we will assume that all of the perturbations $(\Delta_n)$ are fixed 
except for the $-1$ time
coordinate. That is, we compute the probability that the perturbed
and unperturbed fast spaces are close conditioned on $(\Delta_n)_{n\ne-1}$ and 
$\omega$.

We think of 
$\topspaceDSpert j\oom$ as a random variable (depending on $\Delta_{-1}$),
then applying the sequence of
matrices (all already fixed), $\cocycleps{nN}\oom$, show that the resulting
$j$-dimensional subspace is highly likely to be closely aligned to
$\topspaceDSunpert j{\sigma^{nN}\om}$.

To control the evolution, we successively apply $\cocycleps N\oom$,
$\cocycleps N{\ssig^N\oom},\ldots, \cocycleps N{\ssig^{(n-1)N\oom}}$, 
where $\oom=(\om, \Delta)$, $s$ denotes the left shift and  
$\ssig(\oom)=(\sig \om, s \Delta)$.
We will assume that the underlying blocks of unperturbed $A$'s are 
good blocks. The number, $n$, of
steps will be fixed. In fact, $n$ will
depend on the difference $\lambda_j-\lambda_{j+1}$ and the quantity, 
$C$, appearing in Lemma~\ref{lem:Clogeps}.
Hence for small $\eta$, it will be very
likely that one has $n$ consecutive good blocks.

We shall use the following parameterization of the Grassmannian of 
$j$-dimensional subspaces of $\R^d$. Let $f_1,\ldots,f_j$ be a basis
for $F$, a $j$-dimensional subspace, and $e_1,\ldots,e_{d-j}$ a basis
for $E$, a complementary subspace. Now for any $j$-dimensional vector space $V$
with the property that $V\cap E=\{0\}$, each $f_k$ can be uniquely expressed in the
form $v_k-\sum_i b_{ik}e_i$ where $v_k\in V$. The parameterization of $V$ with 
respect to $(F,E)$ chart (or more formally with respect to chart arising from
$F$ and $E$ with their chosen bases)
is the matrix $B=(b_{ik})_{1\le i\le d-j,1\le k\le j}$. Conversely, given the matrix
$B$, one can easily recover a basis for $V$: $v_k=f_k+\sum_i b_{ik}e_i$ and hence
the subspace $V$.

\begin{lem}\label{lem:perpcalc}
Let $F$ and $E$ be orthogonal complements in $\R^d$ and let $(f_i)_{1\le i\le j}$
and $(e_i)_{1\le i\le d-j}$ be orthonormal bases. Let $V$ be a $j$-dimensional
subspace of $\R^d$ such that $V\cap E=\{0\}$. Let the 
parameterization of $V$ be $B$. Then
\begin{equation*}
\perp(V,E)=\sqrt{1-\frac{\|B\|}{\sqrt{1+\|B\|^2}}}.
\end{equation*}
In particular for any $M>1$, $\|B\|\le M$ implies $\perp(V,E)\ge 1/(2M)$.
\end{lem}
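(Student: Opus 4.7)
The plan is to unpack the definition of $\perp(V,E)$ using the orthogonality of $F$ and $E$, then reduce the problem to optimizing the ratio of $E$-component to $F$-component for unit vectors in $V$, which is directly controlled by the parameterizing matrix $B$.

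First, for any unit $v\in V$, write $v=f(v)+g(v)$ with $f(v)\in F$ and $g(v)\in E$; since $F\perp E$ we have $\|f(v)\|^2+\|g(v)\|^2=1$. For any unit $e\in E$,
\begin{equation*}
\|v-e\|^2=\|f(v)\|^2+\|g(v)-e\|^2.
\end{equation*}
Minimizing over unit $e\in E$ is achieved (when $g(v)\neq 0$) by $e=g(v)/\|g(v)\|$, which yields $\|v-e\|^2=2(1-\|g(v)\|)$. Hence
\begin{equation*}
\perp(V,E)=\inf_{v\in V\cap S}\sqrt{1-\|g(v)\|}=\sqrt{1-\sup_{v\in V\cap S}\|g(v)\|}.
\end{equation*}
The hypothesis $V\cap E=\{0\}$ ensures $\|g(v)\|<1$ on $V\cap S$, so the expression under the square root is nonnegative.

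Next, I would use the chart: an arbitrary $v\in V$ has the form $v=\sum_k c_k v_k$ where $v_k=f_k+\sum_i b_{ik}e_i$. Since the $f_k$ and $e_i$ are orthonormal bases of $F$ and $E$, the $F$-component $f(v)$ has coefficient vector $c$ and the $E$-component $g(v)$ has coefficient vector $Bc$. Therefore the unit-norm constraint becomes $\|c\|^2+\|Bc\|^2=1$, and we must maximize $\|Bc\|$ subject to this. Parametrizing by the ratio $r=\|Bc\|/\|c\|$, one finds $\|Bc\|=r/\sqrt{1+r^2}$; since $x\mapsto x/\sqrt{1+x^2}$ is increasing and $r$ ranges over $[0,\|B\|]$ as $c$ varies, the supremum is attained at $r=\|B\|$, giving $\sup_v\|g(v)\|=\|B\|/\sqrt{1+\|B\|^2}$. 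Substituting produces the claimed formula.

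For the second assertion, monotonicity of $x\mapsto x/\sqrt{1+x^2}$ together with $\|B\|\le M$ gives
\begin{equation*}
\perp(V,E)^2\ge 1-\frac{M}{\sqrt{1+M^2}}=\frac{1}{\sqrt{1+M^2}\bigl(\sqrt{1+M^2}+M\bigr)}.
\end{equation*}
For $M>1$ one has $\sqrt{1+M^2}\le\sqrt{2}\,M$, so the denominator is at most $(2+\sqrt{2})M^2<4M^2$, yielding $\perp(V,E)\ge 1/(2M)$.

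The argument is essentially a direct computation; the only mildly delicate point is verifying that the straightforward choice $e=g(v)/\|g(v)\|$ really minimizes $\|v-e\|$ over unit vectors in $E$ (rather than only over all vectors in $E$), but this follows from the simple one-variable calculation above. No step is a genuine obstacle.
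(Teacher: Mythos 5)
Your proof is correct and follows essentially the same route as the paper's: decompose a unit $v\in V$ orthogonally, observe that the squared distance to the nearest unit vector in $E$ is $2(1-\|Bc\|)$, and then maximize $\|Bc\|$ subject to $\|c\|^2+\|Bc\|^2=1$ via the dominant singular direction of $B$. You add a clean verification of the ``in particular'' bound $\perp(V,E)\ge 1/(2M)$, which the paper leaves implicit.
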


\begin{proof}
	Let $v_k=f_k+\sum_i b_{ik}e_i$ so that $(v_k)$ forms a basis for $V$. Now
	let $v=\sum_k c_kv_k=\sum c_kf_k +\sum_i(Bc)_ie_i$ belong to $V\cap S$, so that
	$\|c\|^2+\|Bc\|^2=1$. The closest point in $E\cap S$ to $v$
	is $(1/\|Bc\|)\sum_i(Bc)_ie_i$, 
	which is at a square distance $\|c\|^2+(\|Bc\|-1)^2=2(1-\|Bc\|)$ from $v$. 
	This distance is minimized when $c$ is the multiple of the 
	dominant singular vector of $B$
	for which $\|c\|^2+\|Bc\|^2=1$. That is, 
	$\|c\|=1/\sqrt{\|B\|^2+1}$ and $\|Bc\|=\|B\|/
	\sqrt{\|B\|^2+1}$. Substituting this, we obtain the claimed formula for $\perp(V,E)$.
	
\end{proof}

We will do the iteration using the following steps:
\begin{enumerate}[(S1)]
\setcounter{enumi}{-1}
\item Express $V=\topspaceDSpert j{\oom}$
as a matrix $B$ using the $(\bottomspacemat j{\cocycleps N{\oom}}^\perp,
\bottomspacemat j{\cocycleps N{\oom}})$ chart.\label{step:newzero}
Set $i=0$.\item Let $C_i=\cocycleps N{\ssig^{iN}\oom}$. Compute $C_i (V)$ in the
  $(\topspacemat j{C_i}  ,\topspacemat j{C_i}^\perp)$ chart; this is 
  straightforward as $C_i$ is diagonal with respect to the pair of
  bases on the domain and range spaces.\label{step:newone}
\item Change bases to the $(\bottomspacemat j{C_{i+1}}^\perp,
\bottomspacemat j{C_{i+1}})$ chart.
Update $V$ to $C_i(V)$,
increase $i$ and repeat steps 1 and 2 a total of $n$ times.\label{step:newtwo}
\end{enumerate}

We will see that above transformations are given by fractional linear
transformations on matrices, and use this formalism, together
with properties of multivariate normal distributions to
establish the fact that the fast Oseledets space for the perturbed 
matrix cocycle is with high probability close to the fast Oseledets 
space for the unperturbed matrix cocycle.

\begin{proof}[Proof of Lemma \ref{lem:convSpaces}]
Recall that, 
once the initial cocycle $A_\om$ and $j$ are fixed, the 
Lyapunov exponents $\lam_j, \lam_{j+1}$ 
and the constant $C$ 
appearing in Lemma \ref{lem:Clogeps},
are fixed as well. Fix $n$ satisfying
\begin{equation}\label{eq:nchoice}
n(\lam_j-\lam_{j+1}-6\tau)>1/C.
\end{equation}

We apply Lemma \ref{lem:goodblocks} with $\eta_0=\chi/(2n+2)$ and
$\delta_1=\chi/2$. Let $\kappa$, $\delta$, $G$ and $n_0$ be as in the conclusion 
of the lemma. 

We now fix the range of $\epsilon$ in which we will obtain the required 
closeness of the top spaces. We shall set $N=\lfloor C |\log\epsilon|\rfloor$,
and will require that $N$ be large enough (and hence
that $\epsilon$ should be small enough) to simultaneously 
satisfy a number of conditions:

\begin{enumerate}
\renewcommand{\theenumi}{C\arabic{enumi}} 
 \renewcommand{\labelenumi}{(\theenumi)}
\item $N > n_0$;\label{cond:goodblocks}
\item $e^{N\tau}>8/\delta^2$ (and since $\del<2$, $e^{N\tau}>1+2/\delta$);
\label{cond:8del}
\item $\exp(N(n(\lambda_j-\lambda_{j+1}-6\tau)-1/C))>4(e\pi/2)^{d^2/2}j^{3/2}/\chi$;
\label{cond:eps}
\item $\PP(\|A_\om\|>e^{\tau nN}-1)<\chi/4$;\label{cond:prob}
\item $N>\log(2/\delta)/\tau$;\label{cond:dunno}
\item $e^{N(-\lambda_j+\lambda_{j+1}+2\tau)}<5\del^2/(1+\del)$\label{cond:del2}.
\end{enumerate}

Let $\tld{G}=\bigcap_{j=0}^{n} \sig^{-jN}G$.
Then $\bbp (\tld{G})\geq 1-\chi/2$. Furthermore, we have the following result, whose
proof is deferred until \S\ref{sec:pfClaim}.
\begin{claim}\label{claim:conv4good}
Assume $N$ satisfies conditions \eqref{cond:goodblocks}--\eqref{cond:dunno}. Then,
  \begin{equation}\label{eq:toprove}
\bar \PP(\angle\big(\topspaceDSpert j{\oom},
 \topspaceDSunpert j\om\big)>\chi|\om)\le\chi/2\text{ for all $\om\in\tilde G$.}
 \end{equation}
\end{claim}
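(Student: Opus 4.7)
The plan is to execute the iterative procedure (S0)--(S2) forward through the $n$ good blocks and track the matrix $B_i$ parameterizing the evolving subspace $V_i = \topspaceDSpert j{\bar\sigma^{iN}\bar\omega}$ in the chart defined by the singular vectors of $C_i=\cocycleps N{\bar\sigma^{iN}\bar\omega}$. The whole argument reduces to three ingredients: (i) an initial distributional bound on $B_0$ arising from the uniform randomness of $\Delta_{-1}$ alone; (ii) a per-block contraction estimate for the $B$-matrix under Step (S1); and (iii) a chart-change estimate for Step (S2) ensuring the contraction is not destroyed between blocks.

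For (i), I would fix $\omega$ and all $\Delta_k$ with $k\ne-1$, so that $C_0=\cocycleps N{\bar\omega}$ and its singular vectors are deterministic, while $V_0 = (A(\sigma^{-1}\omega)+\ep\Delta_{-1})\cdot W$, with $W:=\topspaceDSpert j{\bar\sigma^{-1}\bar\omega}$ deterministic, depends affinely on $\Delta_{-1}$. Writing $V_0$ in the $(\bottomspacemat j{C_0}^\perp,\bottomspacemat j{C_0})$ chart yields $B_0=\gamma(\Delta_{-1})\alpha(\Delta_{-1})^{-1}$, a rational function of $\Delta_{-1}$. The aim is to push forward the uniform distribution of $\Delta_{-1}$ and use a Jacobian/submersion argument to conclude $\bar\PP(\|B_0\|>M_0\mid \omega,\Delta_{\ne-1})<\chi/4$ for some $M_0$ at worst polynomial in $1/\ep$, with the $(e\pi/2)^{d^2/2}$ factor in condition~\eqref{cond:eps} reflecting a Gaussian-type tail estimate.

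For the iteration, Step (S1) uses the SVD of $C_i$ to transform the parameterization as $B_i\mapsto S^{\mathrm{bot}}_i B_i (S^{\mathrm{top}}_i)^{-1}$, contracting $\|B_i\|$ by the singular-value ratio $s_{j+1}/s_j\le e^{-N(\lambda_j-\lambda_{j+1}-2\tau)}$ on a good block (invoking Lemma~\ref{lem:goodblocks}\eqref{it:svsep} together with the singular-value comparison between $C_i$ and $\cocycle N{\sigma^{iN}\omega}$ from Lemma~\ref{lem:LY}). Step (S2) amounts to passing from the left singular vector chart of $C_i$ to the right singular vector chart of $C_{i+1}$; by Lemma~\ref{lem:goodblocks}\eqref{it:Fcont}--\eqref{it:Econt} both charts have their $F$-axes within angle $2\delta$ of $\topspaceDSunpert j{\sigma^{(i+1)N}\omega}$, so the chart change is a fractional linear transformation $B\mapsto (I+P)^{-1}(B+Q)$ with $\|P\|,\|Q\|=O(\delta)$, and condition~\eqref{cond:8del} keeps the resulting $\|B\|$-inflation within a $4\tau$ slack in the contraction exponent. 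Iterating $n$ times gives $\|B_n\|\le M_0\,e^{-nN(\lambda_j-\lambda_{j+1}-6\tau)}$, which by condition~\eqref{cond:eps} is so small that Lemma~\ref{lem:perpcalc} places $V_n$ within angle $\chi$ of the $F$-axis at time $(n+1)N$, and the latter is within $\delta$ of $\topspaceDSunpert j{\sigma^{(n+1)N}\omega}$. Translating this closeness at time $nN$ into the Claim's closeness at time $0$ uses the $\bar\sigma$-invariance of the joint law of $(\omega,\topspaceDSpert j{\bar\omega},\topspaceDSunpert j{\omega})$ together with the structure of the good set $\tilde G$.

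I expect the main obstacle to be ingredient (i), the distributional control on $B_0$. The rational map $\Delta_{-1}\mapsto B_0$ becomes singular precisely when $A^\ep_{-1}(W)$ nearly aligns with $\bottomspacemat j{C_0}$, and showing that this bad event has controllable probability requires a careful submersion estimate bounding the Jacobian below by a factor of order $\ep^{j(d-j)}$. Without such a bound, the polynomial-in-$1/\ep$ size of $M_0$ would not be defeated by the exponential contraction over $O(|\log\ep|)$-length blocks, and the whole mechanism would collapse. Everything else is relatively routine linear algebra combined with Lemmas~\ref{lem:LY} and~\ref{lem:goodblocks}, guided by the carefully tuned constants~\eqref{cond:goodblocks}--\eqref{cond:del2}.
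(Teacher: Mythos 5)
Your plan misses the central algebraic observation that makes the paper's argument work. Writing $B_0 = Z_2Z_1^{-1}$ with $Z_1 = F^T(A+\ep\Delta)V$, $Z_2 = E^T(A+\ep\Delta)V$, and then iterating forward, the paper obtains the exact identity
\begin{equation*}
B_n = Y^{(n)}{W^{(n)}}^{-1} + E_n\,B_0\,(W^{(n)}+X^{(n)}B_0)^{-1}
= Y^{(n)}{W^{(n)}}^{-1} + M\,\tilde\Delta^{-1},
\end{equation*}
where $M = E_n Z_2$ and $\tilde\Delta = U(A+\ep\Delta)V$ with $U=W^{(n)}F^T+X^{(n)}E^T$. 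The point is that the factor $Z_1^{-1}$ hidden inside $B_0$ \emph{cancels} when inserted into the forward fractional linear map, so no bound on $\|B_0\|$ is ever needed and none is ever established. Instead one bounds $\|\tilde\Delta^{-1}\|$, whose tail probability acquires the huge factor $s_j(W^{(n)})^{-1}$ --- the \emph{aggregate} singular value gain over all $n$ blocks --- in its numerator. Your proposal to first prove $\|B_0\|\le M_0$ with $M_0$ polynomial in $1/\ep$ (via a Jacobian/submersion estimate, which is not what the paper does; it compares with a Gaussian and estimates the distance of a random column to the span of the others) and then contract step by step is the natural first thing to try, but it doesn't close the loop: the per-block contraction rate is roughly $\ep^{C(\lambda_j-\lambda_{j+1}-2\tau)}$, whereas condition~\eqref{eq:nchoice} only guarantees $Cn(\lambda_j-\lambda_{j+1}-6\tau)>1$, so for small spectral gaps and correspondingly large $n$ a single block need not defeat $M_0\sim 1/\ep$. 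In that regime the intermediate chart change is not even well-posed (one needs $\|B_i\|<6\delta$ for $\zeta_i+\gamma_iB_i$ to be invertible), so the step-by-step bookkeeping breaks down at $i=1$; the global factorisation is what rescues the argument.

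A second, smaller inaccuracy: your description of Step (S2) as a fractional linear map $B\mapsto(I+P)^{-1}(B+Q)$ with $\|P\|,\|Q\|=O(\delta)$ is not supported. You argue that both $\topspacemat j{C_i}$ and $\bottomspacemat j{C_{i+1}}^\perp$ are within $O(\delta)$ of $\topspaceDSunpert j{\sigma^{(i+1)N}\omega}$, but Lemma~\ref{lem:goodblocks}\eqref{it:Econt} places $\bottomspacemat j{C_{i+1}}$ near $\bottomspaceDSunpert j{\sigma^{(i+1)N}\omega}$, and hence $\bottomspacemat j{C_{i+1}}^\perp$ near $\bottomspaceDSunpert j{\sigma^{(i+1)N}\omega}^\perp$, which is \emph{not} the same as $\topspaceDSunpert j{\sigma^{(i+1)N}\omega}$ unless the Oseledets splitting happens to be orthogonal. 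Lemma~\ref{lem:goodblocks}\eqref{it:sep} only supplies a transversality lower bound, so the chart-change orthogonal matrix $P_i$ need not be close to the identity; the only quantitative input is $\|\zeta_i^{-1}\|\le 1/(6\delta)$ obtained from $\perp(\topspacemat j{C_i},\bottomspacemat j{C_{i+1}})>6\delta$. This is precisely why the paper tracks the full $2\times2$ block product $Q_{n-1}\cdots Q_0$ and its Schur-complement $E_n$ rather than pretending each chart change is a small perturbation.
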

With this result at hand, the proof of Lemma~\ref{lem:convSpaces} may be immediately concluded, as follows.
\begin{proof}[Proof of Lemma~\ref{lem:convSpaces} using Claim~\ref{claim:conv4good}]
\begin{align*}
 \bar\bbp \big( \oom &:\angle(\topspaceDSpert j{\oom}, 
 \topspaceDSunpert j\om) >\chi \big) \\
 &\leq \bar\bbp(\angle(\topspaceDSpert j{\oom},\topspaceDSunpert j\om)
>\chi \ |\ \om \in \tld{G})  \cdot \bbp(\tld{G}) + (1-\bbp(\tld{G}))\\
 &\leq \chi/2 \cdot(1-\chi/2)+\chi/2 < \chi.
\end{align*}
\end{proof}

\subsection{Proof of Claim~\ref{claim:conv4good}}\label{sec:pfClaim}

Let $V_0=\topspaceDSpert j{\oom}$, which we consider to be a 
random variable by fixing all matrices
except the $-1$st, and let $V_{i+1}=C_i(V_i)$. 
Write $B_i$ for the matrix of $V_i$ with respect to the 
$(\bottomspacemat j{C_i}^\perp,\bottomspacemat j{C_i})$ basis, 
as explained in Step~\ref{step:newzero} above;
so that in particular, $B_0$ is a random variable with $\epsilon$-variability.

Let $R_i$ be the matrix describing multiplication by $C_i$ with respect to the 
$(\bottomspacemat j{C_i}^\perp,\bottomspacemat j{C_i})$ and
$(\topspacemat j{C_i},\topspacemat j{C_i}^\perp)$ bases. 
This corresponds to Step~\ref{step:newone}.
Let $P_i$ (corresponding to Step \ref{step:newtwo}) be the basis change matrix
from the $(\topspacemat j{C_i},\topspacemat j{C_i}^\perp)$ to the 
$(\bottomspacemat j{C_{i+1}}^\perp,\bottomspacemat j{C_{i+1}})$ basis.

Then, $R_i$ is diagonal, say
\begin{align*}
R_i&:=\begin{pmatrix}D_{2,i}&0\\0&D_{1,i}\end{pmatrix},
\end{align*}
where $D_{1,i}$ is the diagonal matrix with
entries $s_{j+1},\ldots,s_d$ and $D_{2,i}$ is the diagonal matrix with
entries $s_1,\ldots,s_j$, where $s_1\geq \ldots \geq s_d$ are
the singular values of $C_i$. Notice that the ratio between 
the largest entry of
$D_{2,i}$ and the smallest entry of $D_{1,i}$ is at least
$e^{(\lambda_j-\lambda_{j+1}-\tau)N}$.

Notice that $P_i$ is an orthogonal matrix, as it is
the change of basis matrix between two orthonormal bases. Let
\begin{align}
P_i&:=\begin{pmatrix}\zeta_i&\gamma_i\\\beta_i&\alpha_i\end{pmatrix},
\text{ and }\\
Q_i&:=P_iR_i=:\begin{pmatrix}q_i&s_i\\p_i&r_i\end{pmatrix}. \label{eq:Qi}
\end{align}
We use a similar argument to that in Lemma \ref{lem:perpcalc} to estimate 
$\|\zeta^{-1}\|$. From the definition of the good set $G$ (after \eqref{eq:Ncond}), we know
$\perp(\topspacemat j{C_i},\bottomspacemat j{C_{i+1}})>6\delta$.
Let $\topspacemat j{C_i}$ be spanned by the singular vector images
$f_1,\ldots,f_j$; $\bottomspacemat j{C_{i+1}}^\perp$ be spanned by
the singular vectors $g_1,\ldots,g_j$ and $\bottomspacemat j{C_{i+1}}$ be
spanned by $h_1,\ldots,h_{d-j}$. 
In particular, if $a_1^2+\ldots+a_j^2=1$ and $v=a_1f_1+\ldots+a_jf_j$, 
then with respect to the $((g_k),(h_k))$ basis, $v$ has coordinates
$(\zeta_i a,\beta_i a)$. The nearest point in the unit sphere of
$\bottomspacemat j{C_{i+1}}$ 
has coordinates $\beta_i a/\|\beta_i a\|$ with respect to the $(h_k)$ vectors.
The distance squared between the two points is, by the calculation in
Lemma \ref{lem:perpcalc}, $2-2\|\beta_i a\|$. By the goodness property, this exceeds
$72\delta^2$, so that $1-\|\beta_ia\|\ge 36\delta^2$ and $\|\zeta_ia\|^2
=(1-\|\beta_ia\|)(1+\|\beta_ia\|)\ge 36\delta^2$. In particular, we deduce
\begin{equation}\label{eq:zetabd}
\|\zeta_i^{-1}\|\ge 1/(6\delta).
\end{equation}

Let us also note that the $p_i$, $q_i$, $r_i$ and
$s_i$ depend only on the choice of matrices from time 0 onwards and hence have
been fixed by the conditioning, whereas $B_0$ is a random quantity whose 
conditional distribution we will study in \S\ref{sec:DistB0}.

Notice that the matrix $Q_i$ is characterized by the property 
that if the coordinates of $x\in\R^d$ with
respect to the $(\bottomspacemat j{C_i}^\perp,\bottomspacemat j{C_i})$
basis are given by $z$, then the coordinates of
$\cocycleps N{\ssig^{iN}\oom}x$ are given by $Q_iz$ with respect to the 
$(\bottomspacemat j{C_{i+1}}^\perp, \bottomspacemat j{C_{i+1}})$ basis. 

Set $F_0=I$, $H_0=B_0$. 
Let
$$
\begin{pmatrix}F_i\\H_i\end{pmatrix}=
Q_{i-1}\ldots Q_1Q_0\begin{pmatrix}F_0\\H_0\end{pmatrix}.
$$ 
Recall that $B_i$ is the matrix of $V_i$ with respect to the 
$(\bottomspacemat j{C_i}^\perp,\bottomspacemat j{C_i})$ basis.
Then, we have 
$$
B_i=H_iF_i^{-1}.
$$
To see this, consider a point $x$ of $V$ expressed in terms of the 
$(E(C_0)^\perp,E(C_0))$ basis 
as $z$. Then with respect to the $(\bottomspacemat j{C_{i}}^\perp,
\bottomspacemat j{C_{i}})$ basis, $\cocycleps {iN}\oom x$ has coordinates
$Q_{i-1}\ldots Q_0z$.  $\cocycleps {iN}{\oom} V$ has a basis 
expressed in coordinates of the 
$(\bottomspacemat j{C_i}^\perp,\bottomspacemat j{C_i})$
basis given by the columns of $\begin{pmatrix}F_i\\H_i\end{pmatrix}$.
Post-multiplying by $F_i^{-1}$ gives an alternative basis for 
$\cocycleps {iN}\oom$ expressed in terms 
of the $(\bottomspacemat j{C_i}^\perp,\bottomspacemat j{C_i})$ basis, 
as the columns of 
$\begin{pmatrix}I\\H_iF_i^{-1}\end{pmatrix}$ as required.

In view of Lemmas \ref{lem:LY} and \ref{lem:perpcalc}, 
it remains to show that $B_n=H_nF_n^{-1}$ is controlled for most
choices of the perturbation $\Delta_{-1}$. 

Let
$$
Q_{n-1}\ldots Q_0=
\begin{pmatrix}
W^{(n)}&X^{(n)}\\
Y^{(n)}&Z^{(n)}
\end{pmatrix},
$$
where the $W$'s are $j\times j$, $X$'s are $j\times (d-j)$, $Y$'s are
$(d-j)\times j$ and $Z$'s are $(d-j)\times (d-j)$.

\subsubsection{Singular values and invertibility of $W^{(n)}$}\label{sec:SingValWn}
Let us now show that all $j$ singular values of $W^{(n)}$ are bounded below by a quantity 
close to  $e^{nN\lambda_j}$. This will immediately imply invertibility of $W^{(n)}$. 
We start by proving $\|W^{(k)}x\|\ge \delta\|Y^{(k)}x\|$ for all $x$ and $k\geq 0$.

By (\ref{cond:del2}), 
if $c>\delta$, then
$$
\frac{6\delta c-e^{(\lambda_{j+1}-\lambda_j+2\tau)N}}
{c+e^{(\lambda_{j+1}-\lambda_j+2\tau)N}}>\delta.
$$

Notice that 
\begin{equation*}
\begin{pmatrix}
W^{(k+1)}\\Y^{(k+1)}
\end{pmatrix}
=\begin{pmatrix}q_{k}&s_{k}\\p_{k}&r_{k}\end{pmatrix}
\begin{pmatrix}W^{(k)}\\Y^{(k)}\end{pmatrix}.
\end{equation*}

Suppose $\|W^{(k)}x\|>c\|Y^{(k)}x\|$. Then we have
\begin{align*}
\|W^{(k+1)}x\|&\ge \|q_kW^{(k)}x\|-\|s_kY^{(k)}x\|\\
&\ge\|\zeta_k D_{2,k}W^{(k)}x\|-\|s_k\|\|Y^{(k)}x\|\\
&\ge6\delta \|D_{2,k}W^{(k)}x\|-\|D_{1,k}\|\|Y^{(k)}x\|,
\end{align*}
where we used \eqref{eq:zetabd} to obtain the third inequality. Similarly
\begin{align*}
\|Y^{(k+1)}x\|&\le \|p_kW^{(k)}x\|+\|r_kY^{(k)}x\| \\
&\le \|\beta_kD_{2,k}W^{(k)}x\|+\|r_k\|\|Y^{(k)}x\|\\
&\le \|D_{2,k}W^{(k)}x\|+\|D_{1,k}\|\|Y^{(k)}x\|.
\end{align*}
Using the facts that $\|D_{1,k}\|\le \exp(N(\lambda_{j+1}+\tau))$,
and $\|D_{2,k}x\|\ge \exp(N(\lambda_j-\tau))\|x\|$ for all $x$,
and assuming that $\|W^{(k)}x\|\ge c_{k}\|Y^{(k)}x\|$, we see
that $\|W^{(k+1)}x\|\ge c_{k+1}\|Y^{(k+1)}x\|$, where
$$
c_{k+1} = \frac{6\delta c_{k} e^{(\lambda_j-\tau)N}-e^{(\lambda_{j+1}+\tau)N}}
{e^{(\lambda_j-\tau)N}c_{k}+e^{(\lambda_{j+1}+\tau)N}}.
$$
Everything has been set up so that if $c_{k}>\delta$, then $c_{k+1}>\delta$.
It is easy to check that $c_0>\delta$ ($W^{(0)}=I$ and $Y^{(0)}=0$)
so we deduce that 
\begin{equation}\label{eq:W>Y}
\|W^{(k)}x\|\ge \delta\|Y^{(k)}x\|,  
\end{equation}
for all $x$ and $k$ as required.
From this, we see that 
\begin{align*}
  \|W^{(n)}x\|&\ge \delta e^{(\lambda_j-\tau)N}\|W^{(n-1)}x\|
  -e^{(\lambda_{j+1}+\tau)N}\|Y^{(n-1)}x\|\\
  &\ge (\delta
  e^{(\lambda_j-\tau)N}-(1/\delta)e^{(\lambda_{j+1}+\tau)N})\|W^{(n-1)}x\|\\
  &\ge e^{(\lambda_j-2\tau)N}\|W^{(n-1)}x\|,
\end{align*}
where for the last inequality, we used (\ref{cond:dunno}).

In particular we deduce inductively that if $x\ne 0$, then
$W^{(n)}x\ne 0$ for all $n$.  Thus, $W^{(n)}$ is invertible
and we see
\begin{equation}
s_j(W^{(n)})\geq e^{nN(\lambda_j -2\tau)}.\label{eq:sjWnbd}
\end{equation}

\subsubsection{Recursion for $E_n$}\label{sec:BoundEn}
Let $E_n=Z^{(n)}-Y^{(n)}{W^{(n)}}^{-1}X^{(n)}$. This matrix will play a role in bounding $B_n$.
Notice that
\begin{equation}\label{eq:En}
\begin{pmatrix}
W^{(n)}&X^{(n)}\\
Y^{(n)}&Z^{(n)}
\end{pmatrix}
\begin{pmatrix}
-{W^{(n)}}^{-1}X^{(n)}\\I\end{pmatrix}
=
\begin{pmatrix}
0\\E_n
\end{pmatrix}.
\end{equation}
In fact, $E_n$ may be defined this way: $E_n$ is the unique lower
submatrix $M$ such that there exists $A$ satisfying
\begin{equation*}
\begin{pmatrix}
W^{(n)}&X^{(n)}\\
Y^{(n)}&Z^{(n)}
\end{pmatrix}
\begin{pmatrix}
A\\I\end{pmatrix}
=
\begin{pmatrix}
0\\M
\end{pmatrix}.
\end{equation*}

Now we have 
\begin{equation*}
\begin{split}
\begin{pmatrix}
W^{(n+1)}&X^{(n+1)}\\
Y^{(n+1)}&Z^{(n+1)}
\end{pmatrix}
\begin{pmatrix}
-{W^{(n)}}^{-1}X^{(n)}\\I\end{pmatrix}
&=
\begin{pmatrix}
q_{n}&s_{n}\\p_{n}&r_{n}
\end{pmatrix}
\begin{pmatrix}
W^{(n)}&X^{(n)}\\
Y^{(n)}&Z^{(n)}
\end{pmatrix}
\begin{pmatrix}
-{W^{(n)}}^{-1}X^{(n)}\\I\end{pmatrix}\\
&=
\begin{pmatrix}
q_{n}&s_{n}\\p_{n}&r_{n}
\end{pmatrix}
\begin{pmatrix}
0\\E_n
\end{pmatrix}\\
&=
\begin{pmatrix}
s_{n}E_n\\r_{n}E_n
\end{pmatrix}.
\end{split}
\end{equation*}
To finalise the recursion setup,
we now seek matrices $B$  and $C$ such that
\begin{equation}\label{eq:Ecalcpart2}
\begin{pmatrix}
W^{(n+1)}&X^{(n+1)}\\
Y^{(n+1)}&Z^{(n+1)}
\end{pmatrix}
\begin{pmatrix}
B\\0\end{pmatrix}
=
\begin{pmatrix}
-s_{n}E_n\\C
\end{pmatrix}.
\end{equation}

Combining the above, we see
\begin{equation*}
\begin{pmatrix}
W^{(n+1)}&X^{(n+1)}\\
Y^{(n+1)}&Z^{(n+1)}
\end{pmatrix}
\begin{pmatrix}
B-{W^{(n)}}^{-1}X^{(n)}\\I\end{pmatrix}
=
\begin{pmatrix}
0\\r_{n}E_n+C
\end{pmatrix},
\end{equation*}
so that $E_{n+1}=r_{n}E_n+C$.

From \eqref{eq:Ecalcpart2}, we see that $B=-{W^{(n+1)}}^{-1}s_{n}E_n$, so
that $C=Y^{(n+1)}B=-Y^{(n+1)}{W^{(n+1)}}^{-1}s_{n}E_n$.  In
particular, we obtain
\begin{equation*}
E_{n+1}=\left(r_{n}-Y^{(n+1)}{W^{(n+1)}}^{-1}s_{n}\right)E_n.
\end{equation*}
Substituting $x={W^{(k)}}^{-1}z$ in \eqref{eq:W>Y},
we obtain $\|z\|\ge \delta/2\|Y^{(k)}{W^{(k)}}^{-1}z\|$, so that
$\{\|Y^{(k)}{W^{(k)}}^{-1}\|\}_{k\in \N}$ is uniformly bounded by $2/\delta$. 
Furthermore, from the definition of $Q_k$, \eqref{eq:Qi}, and the 
choice of $N$, $\|r_{k}\|,\|s_k\|\leq e^{N(\lambda_{j+1}+\tau)}$,
so that $\|E_{n+1}\|\le e^{N(\lambda_{j+1}+\tau)}(1+\frac2\delta)\|E_n\|$.
Hence by (\ref{cond:8del}), we obtain
\begin{equation}\label{eq:Enbounds}
\|E_n\|\leq e^{N n(\lambda_{j+1}+2\tau)}.
\end{equation}
Finally, using \eqref{eq:En}, we have that
\begin{equation}
\begin{split} \label{eq:B_nE_n}
B_n&=(Y^{(n)}+Z^{(n)}B_0)(W^{(n)}+X^{(n)}B_0)^{-1}\\
&=(Y^{(n)}+(Y^{(n)}{W^{(n)}}^{-1}X^{(n)}+E_n)B_0)(W^{(n)}+X^{(n)}B_0)^{-1}\\
&=Y^{(n)}(I+{W^{(n)}}^{-1}X^{(n)}B_0)(W^{(n)}+X^{(n)}B_0)^{-1}
+
E_nB_0(W^{(n)}+X^{(n)}B_0)^{-1}\\
&=Y^{(n)}{W^{(n)}}^{-1}+E_nB_0(W^{(n)}+X^{(n)}B_0)^{-1}.
\end{split}
\end{equation}

\subsubsection{Distribution of $B_0$}\label{sec:DistB0}

Recall that we conditioned on $\omega$ and $(\Delta_n)_{n\ne-1}$. This
determines the top subspace at time $-1$, as well as the
$\bottomspacemat j{\cocycleps N{\bar\sig^{kN}\oom}}$
and $\bottomspacemat j{\cocycleps N{\bar\sig^{kN}\oom}}^\perp$
spaces for each $k\ge 0$.

Let $V$ be a $d\times j$ matrix whose columns consist of an orthonormal
basis in $\R^d$ for
the fast space at time $-1$.  The fast space at time 0 has a basis
given by the columns of $(A_{\sigma^{-1}\om}+\epsilon\Delta_{-1})V$
(recall that $\Delta$ was assumed
to be independent of the other perturbations of
$(A(\sigma^n\omega))_{n\in\Z\setminus \{-1\}}$ that have already been
fixed). For this section, we write $A$ in place of $A_{\sig^{-1}\om}$ and 
$\Delta$ in place of $\Delta_{-1}$.
The coordinates of $(A+\epsilon\Delta)V$ in terms of the
$(\bottomspacemat j{\cocycleps N{\bar\sig^{kN}\oom}}^\perp,
\bottomspacemat j{\cocycleps N{\bar\sig^{kN}\oom}})$
basis are given by
\begin{equation*}
\begin{pmatrix}F^T\\
E^T\end{pmatrix}
(A+\epsilon\Delta)V
=:
\begin{pmatrix}Z_1\\Z_2\end{pmatrix}
,
\end{equation*}
where $F$ is the matrix whose column vectors are the (orthonormal)
basis for $E^\perp$ and $E$ is the matrix whose column vectors are the
orthonormal basis for $E$. Specifically, the $j$th column of this
matrix gives the $(E^\perp,E)$ coordinates of the image of the $j$th
basis vector of $V$ under $A+\epsilon\Delta$. The matrix $B_0$ is then
given by $Z_2Z_1^{-1}$, that is $(E^T(A+\epsilon\Delta)
V)(F^T(A+\epsilon \Delta) V)^{-1}$.

\subsubsection{Bounds on $B_n$}
Substituting the expression for $B_0$ into \eqref{eq:B_nE_n}, we get
\begin{align*}
B_n&=Y^{(n)}{W^{(n)}}^{-1}+E_nE^T(A+\epsilon\Delta)
V(F^T(A+\epsilon \Delta) V)^{-1}\,\cdot\\
&\qquad\qquad\qquad\qquad\left[W^{(n)}+X^{(n)}
E^T(A+\epsilon\Delta) 
V(F^T(A+\epsilon \Delta) V)^{-1}\right]^{-1}\\
&=Y^{(n)}{W^{(n)}}^{-1}+E_nE^T(A+\epsilon\Delta) V
\left[W^{(n)}F^T(A+\epsilon\Delta)V+X^{(n)}E^T(A+\epsilon\Delta)V\right]^{-1}\\
&=Y^{(n)}{W^{(n)}}^{-1}+E_nE^T V(A+\ep\Delta)(UAV+\ep U\Delta V)^{-1},
\end{align*}
where $U=W^{(n)}F^T+X^{(n)}E^T$.

We make the following definitions:
\begin{equation}\label{def:MD}
\begin{split}
M&=E_nE^T(A+\epsilon\Delta) V\text{;}\\
D&=UAV\text{;\qquad\qquad\qquad and}\\
\tilde \Delta&=D+\epsilon U\Delta V.
\end{split}
\end{equation}
Now we have
\begin{equation}\label{eq:BnFinal}
B_n=Y^{(n)}{W^{(n)}}^{-1}+M
\tilde\Delta^{-1}.
\end{equation}

What remains is to give an upper bound on $\|M\|$ and
to show that 
$\|\tilde \Delta^{-1}\|$ is small for a large set of $\Delta$'s.

\subsubsection{Bounds on $\|\tilde \Delta^{-1}\|$ using multivariate 
normal random variables}\label{sec:NormalBounds}
 
\subsubsection*{Reduction to normal case}
We want bounds on $\mathbb P(\|(D+\epsilon U\Delta V)^{-1}\|\ge T)$.
We obtain these by comparing with the corresponding probability
when $\Delta$ is replaced by a matrix of independent 
standard normal random variables.
Let $Z$ be a $d\times d$ matrix of independent standard normal random
variables. We first show it suffices to compute 
$\mathbb P(\|(D+\epsilon UZV)^{-1}\|\geq T)$.

For a $T>0$ to be fixed below, let $R$ be the subset of
$d\times d$ matrices $C$ with entries in $[-1,1]$ such that
$\|(D+\epsilon UCV)^{-1}\|\geq T$.

Notice that $\mathbb P(Z \in R)=\int_R f_Z(X)\,dX$, where
$f_Z(X)=(2\pi)^{-d^2/2}\exp(-\sum_{1\le i,j\le d}X_{ij}^2/2)$ is the
density function of the $d\times d$ matrices with $N(0,1)$ entries. In
particular $f_Z(X)\ge (2\pi e)^{-d^2/2}$ for all matrices $X$ with
entries in $[-1,1]$, so that $\mathbb P(Z\in R)\ge (2\pi
e)^{-d^2/2}\text{Vol}(R)$ (where $\text{Vol}(R)$ is the volume of $R$
as a subset of $\R^{d^2}$). Similarly $\mathbb P(\Delta\in R)=
2^{-d^2}\text{Vol}(R)$ since the probability density function of $d\times d$
matrices taking values in $[-1,1]$ is $2^{-d^2}$. We see that 

\begin{equation}\label{eq:Nunifcomp}
\mathbb
P(\Delta\in R)\le (e\pi/2)^{d^2/2}\mathbb P(Z\in R). 
\end{equation}

\subsubsection*{Bound in the normal case}
Now let $\tilde Z=D+\epsilon UZV$ and let $\hat
V_i=\text{span}\{c_1,\ldots,c_{i-1},c_{i+1},\ldots,c_j\}$, where the
$c_i$ are the columns of $\tilde Z$.  Let $d_i=d(c_i,\hat
V_i)$. Let $x=(x_1,\ldots,x_j)^T$. Now we have $\|\tilde Z
x\|=\|x_ic_i+(x_1c_1+\ldots+x_{i-1}c_{i-1}+x_{i+1}c_{i+1}+\ldots+
x_jc_j)\|\ge d(x_ic_i,\hat V_i)=|x_i|d_i$. We see that $\|\tilde Z
x\|\ge \max_i (|x_i|d_i)\ge (\min_i d_i)\max_i|x_i|\ge \|x\|\min_i
d_i/\sqrt j$. In particular, we deduce that $\|\tilde Z^{-1}\|\le
\sqrt j/\min_i d_i$.
We now have 
\begin{equation}\label{eq:colsep}
\mathbb P(\|\tilde Z^{-1}\|\ge T)\le \mathbb P(\min_i
d_i\le \sqrt j/T)\le\sum_i \mathbb P(d_i\le \sqrt j/T).
\end{equation}

Notice that the entries of the matrix $D+\epsilon UZV$ 
have a multivariate normal distribution.
Any two such distributions with the same means and covariances are 
identically distributed. Recall that $V$ is a $d\times j$ matrix whose 
columns are pairwise orthogonal. A
consequence of this is that $ZV$ has the same distribution as a 
$d\times j$ matrix of independent standard normal
random variables. To see this, we see immediately that the expectation of
each entry is 0. We then need
to check the covariances, recalling that the columns of $V$ are orthonormal, 
we get:

\begin{align*}
\cov((ZV)_{ab},(ZV)_{cd})&=
\sum_{l,m} \cov(Z_{al}V_{lb},Z_{cm}V_{md})\\
&=\sum_{l,m}V_{lb}V_{md}\cov(Z_{al},Z_{cm})\\
&=\sum_{l,m}V_{lb}V_{md}\delta_{ac}\delta_{lm}\\
&=\delta_{ac}\sum_m V^T_{bm}V_{md}=\delta_{ac}(V^TV)_{bd}=\delta_{ac}\delta_{bd},
\end{align*}
as required.  

Let $Z'=ZV$. We next observe (by an identical calculation) that
$F^TZ'$ and $E^TZ'$ are distributed
as independent $j\times j$ and $(d-j)\times j$ matrices
with independent standard normal entries. 
Let $Z_1=F^TZV$ and $Z_2=E^TZV$. Recall that $\tilde Z=
D+\ep UZV$ and $U=W^{(n)}F^T+X^{(n)}E^T$. By 
\eqref{eq:colsep}, we are interested in the columns of 
$\tilde Z=D+\epsilon W^{(n)}Z_1
+\epsilon X^{(n)}Z_2$.

For a fixed $i$, we compute the probability that the distance 
of the $i$th column of $\tilde Z$
is distant at least $\sqrt j/T$ from the span of the other columns. 
We give a uniform estimate 
on this probability conditioned on the columns of $Z_1$ other than the $i$th
and the value of $Z_2$.
Having fixed all of this data,
let $\mathbf n$ be a unit normal vector to the $(j-1)$-dimensional space
spanned by the other columns (a constant given the data).
We then want to estimate $\mathbb P\left(|\mathbf n\cdot
(D^{(i)}+\epsilon X^{(n)}Z_2^{(i)}+\epsilon W^{(n)}Z_1^{(i)})|<\sqrt j/T\right)$, 
where the superscript $(i)$ 
indicates we are considering the $i$th column. 

Let $A=\mathbf n\cdot (D^{(i)}+\epsilon X^{(n)}Z_2^{(i)})$ and 
$\mathbf v=\epsilon \mathbf n^TW^{(n)}$ (both are constant given the data
on which we conditioned).
We are therefore interested in 
$\mathbb P(|A+\mathbf v\cdot Z_1^{(i)}|<\sqrt j/T)$. This is bounded above by 
$\mathbb P(|\mathbf v\cdot Z_1^{(i)}|<\sqrt j/T)$. More multivariate 
normal machinery tells us that the distribution
of $\mathbf v\cdot Z_1^{(i)}$ has the same distribution as $\|v\|$ times 
a standard normal random 
variable, so we want to estimate $\mathbb P(|Z_0|<\sqrt j/(T\|v\|))$,
where $Z_0$ is a standard normal random variable. Simple estimates 
show this is less than 
$\sqrt j/(T\|v\|)$, which, in turn, is bounded above by 
$\sqrt j/(\epsilon Ts_j(W^{(n)}))$. Hence,
$\bbp(\|\tilde Z^{-1}\|>T )\leq 
j^{3/2}/(\epsilon Ts_j(W^{(n)}))$.

Using \eqref{eq:Nunifcomp}, we obtain
\begin{equation}
\bbp\left (\|\tilde \Delta^{-1}\|>T \right)\leq 
\frac{(e\pi/2)^{d^2/2}j^{3/2}}{\epsilon Ts_j(W^{(n)})}
\end{equation}

\subsubsection{Final estimates}
We showed in \eqref{eq:sjWnbd} that 
$s_j(W^{(n)})\geq e^{nN(\lambda_j -2\tau)}$.
Recall also the expression for $B_n$ given in \eqref{eq:BnFinal}.
From \eqref{eq:Enbounds} and \eqref{def:MD}, we have the upper bounds: 
$\|M\|\leq e^{N n(\lambda_{j+1}+2\tau)}(1+\|A\|)$, after recalling that the columns of $E$ and $V$ are orthonormal.
Combining this with the results of \S\ref{sec:NormalBounds} 
and \eqref{eq:BnFinal} we obtain that 
\begin{equation}\label{eq:BoundBn}
 \begin{split}
    \bar\bbp(\|B_n -Y^{(n)}{W^{(n)}}^{-1}\|> T)&= 
    \bar\bbp(\|M\tilde\Delta^{-1}\|>T)\\
    &\leq \frac{j^{3/2}(e\pi/2)^{d^2/2}}
    {\epsilon e^{nN(\lambda_j-\lambda_{j+1}-5\tau)}T}
    + \bbp(\|A\|>e^{nN\tau}-1)\\
 \end{split}
\end{equation}

Recalling that $N=C|\log \ep|$ and $\|Y^{(n)}{W^{(n)}}^{-1}\|\leq 2/\del$,
and specialising 
\eqref{eq:BoundBn} to $T=1/\del$, we get
\begin{equation}
\begin{split}
  \bar\bbp(\|B_n \|> 3/\del)&\leq \bar\bbp(\|M\tilde \Delta^{-1}\|>1/\del)\\
  &\leq \frac{j^{3/2}\del}{e^{N(n(\lambda_j-\lambda_{j+1}-6\tau) -1/C)}}+
  \bar\bbp(\|A\|>e^{nN\tau}-1)<\chi/2,
  \end{split}\label{eq:BoundBnDelta}
\end{equation}
where we used (\ref{cond:eps}) and (\ref{cond:prob}) for the final inequality.

Fix $\oom\in\bar\Omega$ and suppose that $\omega\in\tilde G$ and 
$\|B_n \|\leq 3/\del$. Then Lemma \ref{lem:perpcalc}
shows that implies $\perp(\topspaceDSpert j{\ssig^{nN}\oom}, 
\bottomspacemat j{\cocycleps N{\ssig^{nN}\oom}}) 
\geq \del/6$.

We extract three conclusions from the fact that $\sig^{nN}\om\in G$.
Recall that $\del \leq \del_1=\chi/2$.
Lemma \ref{lem:LY}\eqref{it:contr} yields that
$\angle(\topspaceDSpert j{\ssig^{(n+1)N}\oom}, 
\topspacemat j{\cocycleps N{\ssig^{nN}\oom}}) \leq \chi/4$.
Next, the hypotheses of Lemma \ref{lem:LY} are satisfied with 
$A=\cocycle N{\sigma^{nN}\om}$ and $B=\cocycleps N{\sigma^{nN}\om}$.
Conclusion \eqref{it:spacecont} tells us that 
$\angle(\topspacemat j{\cocycleps N{\ssig^{nN}\oom}}, 
\topspacemat j{\cocycle N{\sig^{nN}\om}}) \leq \chi/4$.
Finally  we have $\angle(\topspaceDSunpert j{\sig^{(n+1)N}\om}, 
\topspacemat j{\cocycle N{\sig^{nN}\om}})<\chi/2$ from the definition of $G$.
Combining these we get
\[
 \angle\big(\topspaceDSpert j{\ssig^{(n+1)N}\oom}, 
 \topspaceDSunpert j{\sig^{(n+1)N}\om}\big)<\chi.
\]

By \eqref{eq:BoundBnDelta}, 
$\bar\bbp(\|B_n \|> 3/\del \ | \ \om \in \tld{G})<\chi/2$. Thus,
\[
 \bar\bbp(\angle\big(\topspaceDSpert j\oom,
 \topspaceDSunpert j\om\big)>\chi \ | \om)<\chi/2\text{ for all $\om\in\tilde G$.}
\]
We have therefore established \eqref{eq:toprove}, and Claim~\ref{claim:conv4good} is proved.

\end{proof}

\section*{Acknowledgments}
The research of GF and CGT is supported by an ARC Future Fellowship and an ARC Discovery Project (DP110100068). 
AQ acknowledges NSERC, ARC DP110100068 for travel support and UNSW for hospitality during a research
visit in 2012.

\bibliography{stochstab_refs}
\bibliographystyle{abbrv}
\end{document}